\newcommand{\cstarx}[1]{\mathcal{#1}}
\newcommand{\N}{\mathbb{N}}
\newcommand{\C}{\mathbb{C}}
\newcommand{\cE}{\mathcal{E}}
\newcommand{\eps}{\varepsilon}
\newcommand{\Hawaii}{Hawai\kern.05em`\kern.05em\relax i}
\newcommand{\SOTh}{\mathrm{SOT}\text{-}}
\newcommand{\NN}{\mathbb{N}}
\newcommand{\cstu}{\mathrm{C}^*_u}
\newcommand{\cstql}{\mathrm{C}^*_{\!\mathit{ql}}}
\newtheorem*{rigprob*}{Rigidity Problem for uniform Roe Algebras}
\newtheorem*{rigprobcorona*}{Rigidity Problem for uniform Roe Coronas}
\newcommand{\cstar}{$\mathrm{C}^*$}
\newcommand{\cB}{\mathcal{B}}
\newcommand{\cK}{\mathcal{K}}
\newcommand{\B}{\cstarx{B}}
\newcommand{\R}{\mathbb{R}}
\numberwithin{equation}{section}
\newtheorem{theorem}{Theorem}[section]
\newtheorem*{theorem*}{Theorem}
\newtheorem{problem}[theorem]{Problem}
\newtheorem*{proposition*}{Proposition}
\newtheorem{lemma}[theorem]{Lemma}
\newtheorem*{lemma*}{Lemma}
\newtheorem*{corollary*}{Corollar}
\newtheorem*{fact*}{Fact}
\theoremstyle{definition}
\newtheorem*{definition*}{Definition}
\newtheorem*{acknowledgments}{Acknowledgments}
\newtheorem{claim}[theorem]{Claim}
\newtheorem*{claim*}{Claim}
\newtheorem*{conjecture*}{Conjecture}
\theoremstyle{remark}
\newtheorem*{example*}{Example}
\newtheorem{remark}[theorem]{Remark}
\newtheorem*{remark*}{Remark}
\newtheorem*{note*}{Note}
\newtheorem*{question*}{Question}
\newcommand{\norm}[1]{\left\lVert #1 \right\rVert}
\DeclareMathOperator{\Id}{Id}
\DeclareMathOperator{\propg}{prop}
\numberwithin{equation}{section}
\begin{document}

\title[Quasi-local algebra of expander graphs]{A note on the quasi-local algebra of expander graphs}%

\date{\today} %

\author[Braga]{Bruno M. Braga}
\address[B. M. Braga]{IMPA, Estrada Dona Castorina 110, 22460-320, Rio de Janeiro, Brazil}
\email{demendoncabraga@gmail.com}
\urladdr{\url{https://sites.google.com/site/demendoncabraga}}

\author[\v{S}pakula]{J\'{a}n \v{S}pakula}
\address[J. \v{S}pakula]{School of Mathematical Sciences, University of Southampton, Highfield, Southampton,
SO17 1BJ, United Kingdom}
\email{jan.spakula@soton.ac.uk}
\urladdr{\url{https://spakula.github.io/}}

\author[Vignati]{Alessandro Vignati}
\address[A. Vignati]{
Institut de Math\'ematiques de Jussieu (IMJ-PRG)\\
Universit\'e Paris Cit\'e\\ Institut Universitaire de France\\
B\^atiment Sophie Germain\\
8 Place Aur\'elie Nemours \\ 75013 Paris, France}
\email{alessandro.vignati@imj-prg.fr}
\urladdr{\url{https://www.automorph.net/avignati}}

\maketitle

\begin{abstract}
We show that the   quasi-local algebra of a coarse disjoint union of expander graphs does not contain a Cartan subalgebra isomorphic to $\ell_\infty$.
N.~Ozawa has recently shown that these algebras are distinct from the uniform Roe algebras of expander graphs, and our result describes a further difference.
\end{abstract}  

\section{Introduction}
This paper focuses on the study of Roe-like algebras of metric spaces arising from expander graphs and their properties. We will in particular focus on quasi-local algebras associated to such spaces   and on their Cartan algebras.

In a recent breakthrough, N. Ozawa showed that the quasi-local algebra  of a coarse disjoint union of expander graphs is strictly larger than its uniform Roe algebra   (\cite[Corollary C]{Ozawa2023uRaSmallerQL}). Up until this result, the question of whether these algebras coincided was one of the important open problems in the field (see \cite{SpakulaTikuisis2019,SpakulaZhang2020JFA,Li:2021aa,KhukhroLiVigoloZhang2021AdvMath,BaudierBragaFarahVignatiWillett2024vNa}) which went back to  J.~Roe (see \cite[Page 20]{Roe1996Book}). In his proof, Ozawa showed that uniform Roe algebras can never contain the product of matrix algebras $\prod_{n\in\mathbb{N}}\mathrm M_n(\mathbb C)$ while  the quasi-local algebra of expander graphs always  contains this product. This result suggests a new question: how distinct can the quasi-local and the uniform Roe algebra be? The goal of the present note is to show that the quasi-local algebra of a coarse disjoint union of expander graphs does not have a Cartan subalgebra isomorphic to $\ell_\infty$ (see Theorem \ref{ThmExpNoCartanlInfty}), a property shared by all uniform Roe algebras.  The proof of this result relies both on earlier work on Cartan subalgebras of Willett and White (\cite{WhiteWillett2017}) as well as on Ozawa's (\cite{Ozawa2023uRaSmallerQL}); as Ozawa, we use the concentration of measure phenomenon in order to rule out the existence of such subalgebras. 

Let us carefully introduce our objects.  In coarse geometry, there are several  algebras of bounded operators which serve as models to code in \cstar-algebraic terms the large scale geometry of metric spaces; these are known as Roe-like algebras. Important for us are two of these, the \emph{uniform Roe} and the \emph{quasi-local algebra}, which we shall now define. Firstly, as coarse geometry is the study of metric spaces from afar, local properties of metric spaces are irrelevant in this context and one can restrict themselves to discrete spaces. In fact, throughout these notes, we will only deal with \emph{uniformly locally finite} (abbreviated as \emph{u.l.f.})\ metric spaces, i.e., spaces where balls of a given radius are finite and uniformly bounded in cardinality.

Given a u.l.f.\ metric space $(X,d)$, $\ell_2(X)$ denotes the Hilbert space of square-summable functions $X\to \C$ and $\cB(\ell_2(X))$ the algebra of bounded linear operators on $\ell_2(X)$. Given an operator $a\in \cB(\ell_2(X))$, its   \emph{propagation}  is given by 
\[\propg(a)=\sup\{d(x,y)\mid \langle a\delta_x,\delta_y\rangle\neq 0\} ,\]
where $(\delta_x)_{x\in X}$ denotes the canonical orthonormal basis of $\ell_2(X)$; so, $\propg(a)$ is in $ [0,\infty]$. The \emph{uniform Roe algebra of $X$}, denoted by $\cstu(X)$, is the norm closure of all operators in $\cB(\ell_2(X))$ whose propagation is finite. Identifying $\ell_\infty(X)$, the \cstar-algebra of bounded functions $X\to \C$, with the (multiplication) operators on $\ell_2(X)$ with propagation zero, i.e. the  operators which are diagonal  with respect to the basis $(\delta_x)_{x\in X}$, it is clear that $\ell_\infty(X)\subseteq \cstu(X)$. In fact, $\ell_\infty(X)$ is a \emph{Cartan subalgebra of $\cstu(X)$}, in the sense that $\ell_\infty(X)$ is a maximal abelian subalgebra (abbreviated as \emph{masa}), the  normalizer of $\ell_\infty(X)$ in $\cstu(X)$ generates the whole $\cstu(X)$ as a \cstar-algebra, and there is a faithful conditional expectation $\cstu(X)\to \ell_\infty(X)$  (see \cite[Proposition 4.1]{WhiteWillett2017}). 

It is straightforward that, for any operator in $\cstu(X)$, its cutoffs by projections in $\ell_\infty(X)$ given by subsets of $X$ which are far enough apart have small norm. More precisely, given $A\subseteq X$, let $\chi_A$ denote the orthogonal projection of $\ell_2(X)$ onto $\ell_2(A)$. Then, for any $a\in \cstu(X)$, the following holds: 
\begin{equation}\label{Eq.QL.property}
\forall \eps>0\ \exists r>0\ \forall A,B\subseteq X\ \text{if } d(A,B)>r \text{ then } \|\chi_Aa\chi_B\|\leq \eps. 
\end{equation} 
Indeed, if $b$ is a finite propagation operator with $\|a-b\|\leq \eps$, then $r=\propg(b)$ has the desired property, since $\|\chi_Ab\chi_B\|=0$ for all $A,B\subseteq X$ with $d(A,B)>r$.
It is readily seen that the subset of all operators in $\cB(\ell_2(X))$ satisfying \eqref{Eq.QL.property} forms a \cstar-subalgebra of $\cB(\ell_2(X))$. We call this \cstar-algebra the (uniform) \emph{quasi-local algebra of $X$} and denote it by $\cstql(X)$.

By the discussion above, we have that $\cstu(X)\subseteq \cstql(X)$ and, as mentioned in the first paragraph of this introduction, it was a long standing open problem whether these algebras were actually the same, regardless of $X$. This is an important problem: in fact, in case quasi-local operators correspond to the operators that can be approximated by finite propagation operators, one has  a method for checking whether an operator belongs to a uniform Roe algebra merely by estimating norms of off-diagonal block restrictions of the operator, without having to explicitly produce finite propagation approximants. This is crucial, for instance, in the work of Engel (see \cite{Engel:2014tx,Engel:2018vm}) on the index theory of pseudo-differential operators. In general, checking whether an operator is quasi-local is much easier than checking whether it belongs to $\cstu(X)$.

As of now, the largest class of u.l.f.\ metric spaces for which it is known that  $\cstu(X)=\cstql(X)$ is the class of spaces satisfying Yu's property A (see \cite[Theorem 3.3]{SpakulaZhang2020JFA}, and \cite[Theorem 5]{Ozawa2023uRaSmallerQL} for a much shorter proof). On the other hand, as proved by Ozawa, these algebras are never the same if $X$ is the coarse disjoint union of expander graphs (\cite[Corollary C]{Ozawa2023uRaSmallerQL}). Recall that for $k\in \N$ and $h>0$, a finite (undirected) graph $G=(V,E)$ is a \emph{$(k,h)$-expander} if every vertex is incident to at most $k$ edges, and for all $A\subseteq V$ we have that 
\[A\leq |V|/2 \text{ implies } |\partial A|\geq h|A|,\]
where $\partial A=\{v\in V\setminus A\mid \exists u\in A, \ (v,u)\in E\}$. This condition readily implies that $G$ is connected, and thus we consider $V$ as a metric space endowed with the shortest path distance. Let now $(X_n,d_n)_{n\in\mathbb{N}}$ be a sequence of finite metric spaces, each being a vertex set of some $(k,h)$-expander graph, for $k\in\N$ and $h>0$ fixed and independent of $n\in\N$, and such that $|X_n|\to \infty$ as $n\to \infty$. We endow $X=\bigsqcup_{n\in\mathbb{N}} X_n$ with a metric $d$ such that
\begin{enumerate}
    \item\label{Item.Metric.p.cdu.1} $d\restriction X_n\times X_n=d_{n}$ for all $n\in\N$, and 
    \item \label{Item.Metric.p.cdu.2} $d(X_n,X_m)\to \infty$ as $n,m\to \infty$ with $n\neq m$. 
\end{enumerate}
We refer to such a disjoint union $X$ as a \emph{coarse disjoint union of expander graphs}. Note that while the metric $d$ above is by no means unique,   properties \eqref{Item.Metric.p.cdu.1} and \eqref{Item.Metric.p.cdu.2} above characterize $d$ coarsely. %
For details on expander graphs, we refer to \cite{LubotzkyBook2010}.

We want to understand the possible differences between uniform Roe and quasi-local algebras in the setting of coarse disjoint union of expander graphs, with a specific focus on Cartan \cstar-subalgebras. 
While $\ell_\infty(X)$ is always a Cartan subalgebra of $\cstu(X)$,  in the case of expander graphs, not only $\ell_\infty(X)$ is not a Cartan subalgebra of $\cstql(X)$, but in fact $\cstql(X)$ admits no Cartan subalgebra isomorphic to $\ell_\infty$. More precisely,  we prove the following in Section \ref{SectionMainResult}. 

\begin{theorem}\label{ThmExpNoCartanlInfty}
    Let $X=\bigsqcup_{n\in\mathbb{N}}X_n$ be the coarse disjoint union of   expander graphs. Then $\cstql(X)$ does not have a Cartan subalgebra isomorphic to $\ell_\infty$.
\end{theorem}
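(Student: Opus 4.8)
The plan is to argue by contradiction: suppose $\cstql(X)$ contains a Cartan subalgebra $D \cong \ell_\infty$, and derive a contradiction with the structure of expander graphs by exploiting concentration of measure, following the strategy of Ozawa together with the rigidity analysis of Willett–White. First I would recall from \cite{WhiteWillett2017} that a Cartan masa $D$ in a uniform Roe-like algebra must, up to conjugation by a unitary, be close to $\ell_\infty(X)$ on large pieces; more precisely, since $D \cong \ell_\infty$ is generated by its minimal projections $(p_i)_i$, one knows that the $p_i$ are (approximately) supported near points of $X$, i.e.\ each $p_i$ is within small norm of a rank-one projection $\chi_{x_i}$ after a suitable unitary conjugation, and the normalizer of $D$ generates $\cstql(X)$. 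The key extra input compared to the uniform Roe setting is that the conditional expectation onto $D$ and the quasi-locality condition \eqref{Eq.QL.property} survive unitary conjugation, so without loss of generality we may assume $D$ is a Cartan masa sitting inside $\cstql(X)$ with a faithful conditional expectation $E \colon \cstql(X) \to D$.

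Next I would bring in the product of matrix algebras. By \cite[Corollary C]{Ozawa2023uRaSmallerQL}, $\cstql(X)$ contains a copy of $\prod_n \mathrm M_n(\C)$ (realized via the adjacency/Markov operators of the expanders and the associated spectral projections), and this copy is \emph{not} contained in $\cstu(X)$. The heart of the argument is that a Cartan subalgebra isomorphic to $\ell_\infty$ is ``too rigid'' to coexist with such a product: restricting $E$ to $\prod_n \mathrm M_n(\C)$ and using that $D$ is atomic abelian, one would obtain, for each $n$, a collection of mutually orthogonal projections in $D$ whose supports must align with the geometry of $X_n$. Here is where concentration of measure enters: on an expander $X_n$ with uniformly bounded degree $k$ and expansion constant $h$, any partition of $X_n$ into a fixed number of pieces, or any operator in $\prod_n \mathrm M_n(\C)$ with small propagation-like behavior with respect to $D$, must have a piece of measure bounded away from $0$ and $1$, and the edge boundary of that piece forces off-diagonal matrix coefficients of the Markov operator to be large — contradicting the quasi-locality estimate \eqref{Eq.QL.property} applied to well-separated sets. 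Concretely, one uses that the normalized spectral gap of the expander is bounded below, so the Markov operator $m_n$ does not almost-commute with any nontrivial projection coming from a coarse partition, while $D$-almost-invariance would be forced by $E(m_n) \in D$ together with the masa property.

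The main obstacle I anticipate is controlling the interaction between the unitary $u$ conjugating $D$ to something close to $\ell_\infty(X)$ and the quasi-local (rather than finite-propagation) nature of the ambient algebra: in $\cstu(X)$ one has genuine finite-propagation approximants, but in $\cstql(X)$ one only has the $\eps$--$r$ estimate \eqref{Eq.QL.property}, so the Willett–White rigidity for $\ell_\infty$-Cartan subalgebras must be re-examined to check that it goes through with quasi-locality in place of finite propagation — this is presumably where the bulk of the technical work lies, and where one must be careful that the constants coming from concentration of measure (depending only on $k$ and $h$) beat the errors introduced by the conjugation. Once that alignment is established, the contradiction is obtained by choosing $n$ large enough that $|X_n|$ is huge and a coarse partition of $X_n$ into boundedly many near-atoms of $D$ is impossible by the expansion inequality $|\partial A| \ge h|A|$, exactly as in Ozawa's use of concentration of measure to separate $\cstql(X)$ from $\cstu(X)$.
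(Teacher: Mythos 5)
There is a genuine gap: the two load-bearing steps of your sketch are asserted rather than proved, and neither is something you can quote off the shelf. First, the claim that a Cartan subalgebra $D\cong\ell_\infty$ of $\cstql(X)$ has, after a unitary conjugation, minimal projections close to rank-one projections $\chi_{x_i}$ supported near points of $X$ is exactly what the whole rigidity machinery is needed to establish; it is not in \cite{WhiteWillett2017} in that form. The actual route is: use \cite[Proposition 4.15]{WhiteWillett2017} to get a coarse space $(Y,\cE)$ and an isomorphism $\Phi\colon\cstql(X)\to\cstu(Y)$ with $\Phi[D]=\ell_\infty(Y)$, then invoke the uniform Roe rigidity results of \cite{BaudierBragaFarahKhukhroVignatiWillett2021uRaRig} to produce $\theta>0$ and a coarse embedding $f\colon X\to Y$ with $\|\Phi(\chi_x)\delta_{f(x)}\|\ge\theta$, and (crucially, cf.\ Remark \ref{Remark.metrizability}) note that $Z=f[X]$ is \emph{metrizable} even though $Y$ need not be. One also needs a preliminary alignment step (Lemma \ref{lemma:ueqmasas}) ensuring that each block projection $\chi_{X_F}$ lies in $D$, which is what lets one conclude $p_n=\Phi^{-1}(\chi_{f[X_n]})\le\chi_{X_n}$; your remark that ``quasi-locality survives conjugation'' does not address this, and the conjugating unitary must in fact be chosen with $u-1$ compact.

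Second, your proposed mechanism for the contradiction --- applying the conditional expectation $E$ to the Markov operators and arguing that the spectral gap prevents $D$-almost-invariance --- is not an argument as stated: nothing forces $E(m_n)\in D$ to produce nearly $m_n$-invariant projections, and no quantitative link between $D$ and coarse partitions of $X_n$ is established. The paper's contradiction is of a different nature: concentration of measure is used not against partitions of the expander but to prove the strengthened containment result (Theorem \ref{ThmExpContainsPropducMatrixPropDim}), namely that $\cstql(X)$ contains $\prod_k\cB(\ell_2(W_k))$ with $W_k$ inside \emph{prescribed} subspaces $V_{n_k}=\mathrm{range}(p_{n_k})$ of proportional dimension; Ozawa's \cite[Corollary C]{Ozawa2023uRaSmallerQL}, which you quote, gives matrix blocks in unspecified positions and is not enough, because the blocks must sit under the projections $p_n$ coming from the Cartan so that $\Phi$ carries them into the corner $\chi_Z\cstu(Y)\chi_Z=\cstu(Z)$. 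The contradiction is then with Ozawa's Theorem A (no uniform Roe algebra of a u.l.f.\ metric space contains $\prod_k\mathrm M_{m_k}(\C)$), applied to the metrizable space $Z$ --- a target your sketch never identifies. Without these ingredients (the $\cstu(Y)$ identification, the coarse embedding and metrizability of $Z$, the block-aligned strengthening of Ozawa's Theorem B, and Theorem A as the endgame), the proposal does not close.
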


As Ozawa's aforementioned result, Theorem \ref{ThmExpNoCartanlInfty} is also valid for \emph{asymptotic expanders}, a weaker property than being an expander. We discuss this is Remark \ref{Remark.Asymp.Exp} below.

We note that any \cstar-subalgebra of $\cstql(X)$ which is abelian and strongly closed is of the  form $\ell_\infty(I)$ for some countable, possibly finite, set $I$ (see \cite[Theorem 1.4]{BaudierBragaFarahVignatiWillett2024vNa}). Therefore, Theorem \ref{ThmExpNoCartanlInfty} implies that the quasi-local algebra of a coarse disjoint union of expander graphs cannot have a strongly closed Cartan subalgebra.  
The following problem remains open.

\begin{problem}
Let $X=\bigsqcup_{n\in\mathbb{N}}X_n$ be the coarse disjoint union of   expander graphs. Can $\cstql(X)$ have a  Cartan subalgebra?%
\end{problem}

As our motivation is to further distinguish $\cstu(X)$ from $\cstql(X)$, we point out that it remains open whether the uniform Roe algebras of u.l.f.\ metric spaces $X$ and $Y$ are isomorphic if and only if their quasi-local algebras are isomorphic. While we cannot solve this question, we prove that the forward implication is always true. In fact, the following stronger statement is proved in Section \ref{SectionExtIso}.

\begin{theorem}\label{ThmIsoOfURAExtendsToQL}
Let $X$ and $Y$ be  u.l.f.\ metric spaces and $\Phi\colon \cstu(X)\to \cstu(Y)$ be an isomorphism. Then, $\Phi$ canonically extends to an isomorphism between $\cstql(X)$ and $\cstql(Y)$. 
\end{theorem}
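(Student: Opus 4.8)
My plan is to show that $\Phi$ is spatially implemented by a unitary $W\colon\ell_2(X)\to\ell_2(Y)$, that this $W$ is ``coarse'' (for which I would invoke the rigidity theory for uniform Roe algebras), and then that $T\mapsto WTW^{*}$ automatically carries $\cstql(X)$ onto $\cstql(Y)$ via an elementary cut-off estimate. I may assume $X,Y$ are infinite, since if $X$ is finite then $\cstu(X)=\cstql(X)=\cB(\ell_2(X))$ is a matrix algebra, $\card Y=\card X$, and there is nothing to prove.

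\textit{Spatial implementation.} First I would note that $\cK(\ell_2(X))$ is the closed ideal of $\cstu(X)$ generated by its minimal projections: if $p\in\cstu(X)$ is a nonzero projection with $p\cstu(X)p=\C p$, then $p\cK(\ell_2(X))p=\cK(p\ell_2(X))=\C p$, so $p$ has rank one, while conversely every rank-one projection of $\cstu(X)$ (e.g.\ each $\chi_{\{x\}}$) is minimal and the ideal these generate is dense in $\cK(\ell_2(X))$. As $*$-isomorphisms preserve minimal projections, $\Phi(\cK(\ell_2(X)))=\cK(\ell_2(Y))$. Since the compacts have a unique irreducible representation, an isomorphism between algebras of compact operators is spatially implemented, so there is a unitary $W\colon\ell_2(X)\to\ell_2(Y)$ with $\Phi(k)=WkW^{*}$ for $k\in\cK(\ell_2(X))$; the two nondegenerate representations $a\mapsto\Phi(a)$ and $a\mapsto WaW^{*}$ of $\cstu(X)$ on $\ell_2(Y)$ then agree on the nondegenerately acting ideal $\cK(\ell_2(X))$, hence coincide, giving $\Phi(a)=WaW^{*}$ for all $a\in\cstu(X)$. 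Moreover $W$ is unique up to a unimodular scalar (two choices differ by an element of $\cstu(X)'\subseteq\cK(\ell_2(X))'=\C1$), so $\hat\Phi:=W(\cdot)W^{*}$ is a canonically defined $*$-isomorphism $\cB(\ell_2(X))\to\cB(\ell_2(Y))$ extending $\Phi$. It remains to show $\hat\Phi$ maps $\cstql(X)$ onto $\cstql(Y)$, and by symmetry (the argument applies verbatim to $\Phi^{-1}$, implemented by $W^{*}$) it is enough to prove $\hat\Phi(\cstql(X))\subseteq\cstql(Y)$.

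\textit{The cut-off estimate.} Fix $a\in\cstql(X)$ and $\eps>0$; I want $s>0$ with $\norm{\chi_{A'}\hat\Phi(a)\chi_{B'}}\le\eps$ whenever $A',B'\subseteq Y$ and $d_Y(A',B')>s$. Since $\chi_{A'}W=W\Phi^{-1}(\chi_{A'})$ and $W^{*}\chi_{B'}=\Phi^{-1}(\chi_{B'})W^{*}$, writing $e:=\Phi^{-1}(\chi_{A'})$ and $f:=\Phi^{-1}(\chi_{B'})$ (projections in $\cstu(X)$) gives $\norm{\chi_{A'}\hat\Phi(a)\chi_{B'}}=\norm{eaf}$. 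The plan is to find $C,D\subseteq X$ with $\norm{e-e\chi_C}$ and $\norm{f-\chi_D f}$ small and $d_X(C,D)$ large: expanding $eaf$ over $\chi_C+(1-\chi_C)$ and $\chi_D+(1-\chi_D)$ yields $\norm{eaf}\le\norm{\chi_Ca\chi_D}+3\max(\norm{e-e\chi_C},\norm{f-\chi_Df})\cdot\norm a$, and the first term is at most $\eps/2$ once $d_X(C,D)$ exceeds the quasi-locality threshold of $a$ for $\eps/2$, while the second is made $\le\eps/2$ by taking the two norms small enough.

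\textit{The geometric input, and the main obstacle.} What makes the previous step run is that $W$ is \emph{coarsely controlled}: here I would invoke the rigidity theory to produce a coarse equivalence $g\colon Y\to X$ such that for every $\delta>0$ there is $\rho>0$ with $\norm{\Phi^{-1}(\chi_{A'})-\Phi^{-1}(\chi_{A'})\chi_{B_X(g(A'),\rho)}}\le\delta$ for all $A'\subseteq Y$; informally, each projection $\Phi^{-1}(\chi_{A'})$ is uniformly $\delta$-close to a spatial projection supported in a bounded neighbourhood of $g(A')$. This \emph{uniform} statement is the heart of the matter and is exactly the type of conclusion produced by the rigidity arguments of White--Willett \cite{WhiteWillett2017} and their successors (obtained through concentration/reindexing); extracting or precisely citing it in the form needed is where I expect the real work to lie. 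Granting it, I would set $C=B_X(g(A'),\rho)$ and $D=B_X(g(B'),\rho)$ with $\delta$ chosen so that $3\delta\norm a\le\eps/2$; since a coarse equivalence is a coarse embedding there is a nondecreasing unbounded $\theta$ with $d_X(g(A'),g(B'))\ge\theta(d_Y(A',B'))$, hence $d_X(C,D)\ge\theta(d_Y(A',B'))-2\rho$, and choosing $s$ with $\theta(s)-2\rho$ above the $\eps/2$-threshold of $a$ completes the argument, giving $\hat\Phi(a)\in\cstql(Y)$.
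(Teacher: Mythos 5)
Your first step (spatial implementation) is fine and is essentially the content of \cite[Lemma 3.1]{SpakulaWillett2013AdvMath}, which the paper simply cites; your cut-off bookkeeping in the second step is also correct. The problem is the third step, which you yourself flag: the ``geometric input'' is not a statement you prove, nor one you can cleanly cite. What you need is a \emph{uniform} localization estimate for the whole family $\left(\Phi^{-1}(\chi_{A'})\right)_{A'\subseteq Y}$: for every $\delta>0$ a single $\rho$ such that each $\Phi^{-1}(\chi_{A'})$ is $\delta$-supported on the $\rho$-neighbourhood of $g(A')$. The rigidity literature gives you weaker things: a coarse (indeed bijectively coarse) equivalence, a uniform lower bound $\|\Phi^{-1}(\chi_y)\delta_{g(y)}\|\ge\theta$ at single points, and ``coarse-like''-type approximation of individual operators by finite-propagation ones. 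Passing from such single-point (or single-operator) statements to your uniform claim over arbitrary infinite $A'$ is not automatic: writing $\Phi^{-1}(\chi_{A'})=\mathrm{SOT}\text{-}\sum_{y\in A'}\Phi^{-1}(\chi_y)$, the pointwise bounds $\|(1-\chi_{B(g(y),\rho)})\Phi^{-1}(\chi_y)\|\le\delta$ do not sum to a norm bound on $\Phi^{-1}(\chi_{A'})(1-\chi_C)$ — the error vectors are not orthogonal, so a Bessel-type or column/row almost-orthogonality argument (using uniform local finiteness and bounded multiplicity of the balls $B(g(y),\rho)$) is genuinely needed. So as written the proof has a real gap exactly at its load-bearing step; the statement you want is plausibly true, but establishing it amounts to redoing a nontrivial piece of the rigidity machinery.

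It is worth noting that the paper avoids all of this quantitative coarse geometry. After spatial implementation, the induced isomorphism of Calkin algebras restricts to one between $\cstu(X)/\cK(\ell_2(X))$ and $\cstu(Y)/\cK(\ell_2(Y))$, hence between their centers, and these centers are exactly the images of the Higson function algebras (\cite[Proposition 3.6]{BaudierBragaFarahVignatiWillett2024vNa}); so $\Phi$ sends Higson functions to Higson functions modulo compacts. Combining this with the characterization of \cite[Theorem 3.3]{SpakulaZhang2020JFA} — $a\in\cstql(Z)$ if and only if $[a,h]$ is compact for every Higson function $h$ — immediately yields that $\mathrm{Ad}(W)$ maps $\cstql(X)$ onto $\cstql(Y)$, with no need for any uniform localization of the projections $\Phi^{-1}(\chi_{A'})$. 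If you want to salvage your approach, you would either have to prove the uniform localization lemma along the lines sketched above, or switch to this softer commutator/Higson-function route.
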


We leave untouched the problem whether $\cstql(X)\cong \cstql(Y)$ implies $\cstu(X)\cong\cstu(Y)$. We note that this is related to the so called \emph{strong rigidity problem for quasi-local algebras}, which remains open to date. Indeed, while it was shown in \cite[Theorem 3.5]{BaudierBragaFarahKhukhroVignatiWillett2021uRaRig} that if $\cstql(X)$ and $\cstql(Y)$ are isomorphic, then $X$ and $Y$ are coarsely equivalent. It is still unknown whether this hypothesis is strong enough to imply that $X$ and $Y$ are \emph{bijectively} coarsely equivalent. A positive answer to this problem  would immediately imply that $\cstu(X)$ and $\cstu(Y)$ must be isomorphic given that $\cstql(X)$ and $\cstql(Y)$ are isomorphic.

We remark that in the non-uniform setting, the rigidity problems have been completely solved by Mart\'inez and Vigolo \cite[Corollary 8.3.5]{martnez2024rigidity}, and the non-uniform analogue of Theorem \ref{ThmIsoOfURAExtendsToQL} follows from \cite[Theorem 10.2.1]{martnez2024rigidity}; see also \cite[Theorem 4.5]{MartinezVigolo2025PMIHES}.

\section{Expander graphs and Cartan  subalgebras}\label{SectionMainResult}

The proof of Theorem \ref{ThmExpNoCartanlInfty} will follow the idea of the proof of \cite[Theorem B]{Ozawa2023uRaSmallerQL}. However, we need a slightly stronger statement than in \cite{Ozawa2023uRaSmallerQL}, see Theorem \ref{ThmExpContainsPropducMatrixPropDim} below. Thus for completeness, we recall some definitions and the statement of the concentration of measure phenomenon. Given $n\in\N$, $\R^n$ denotes the Euclidean space endowed with its canonical norm, $\partial B_{\R^n}$ its unit sphere, and we let $\sigma_n$ be the unique probability measure on $\partial B_{\R^n}$ which is invariant under orthogonal transformations, i.e., if $u\colon \R^n\to \R^n$ is an isometry preserving the origin, then $\sigma_n(A)=\sigma_n(u(A))$ for all measurable $A\subseteq \partial B_{\R^n}$.  Following common terminology in the literature, as the measures $\sigma_n$ are probability measures, we write $\mathbb P$ for $\sigma_n$ if $n\in\N$ is clear  from the context. For a complete proof of the next theorem, we refer the reader to Theorem 12.2.2 in the monograph \cite{AlbiacKalton2006}.

\begin{theorem}[Concentration of measure phenomenon]   \label{ThmConcentrationMeasure}
Let $n\in\N$ and $f\colon \partial B_{\R^n}\to \R$ be $1$-Lipschitz. Then, for all $t>0$, we have 
\[\mathbb P\left( \xi\in \partial B_{\R^n}\mid |f(\xi)-\mathbb Ef|>t\right)\leq 4e^{-\frac{t^2n}{72\pi^2}}.\]
\end{theorem}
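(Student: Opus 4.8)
The plan is to follow the classical route to Gaussian-type concentration on the sphere via L\'evy's isoperimetric inequality. I would first prove concentration of $f$ around a \emph{median} $M_f$ --- a real number with $\mathbb{P}(f\leq M_f)\geq 1/2$ and $\mathbb{P}(f\geq M_f)\geq 1/2$ --- and only at the very end replace $M_f$ by the mean $\mathbb{E}f$. The deep input, which I would quote rather than reprove, is the spherical isoperimetric inequality: among all Borel subsets of $\partial B_{\R^n}$ of a given measure, a spherical cap has the smallest enlargements; equivalently, for $A\subseteq\partial B_{\R^n}$ with $\mathbb{P}(A)\geq 1/2$ and any $\e>0$, the $\e$-neighbourhood $A_\e$ satisfies $\mathbb{P}(A_\e)\geq\mathbb{P}(H_\e)$ where $H$ is a hemisphere.

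For the median estimate: set $A=\{\xi\in\partial B_{\R^n}:f(\xi)\leq M_f\}$, so $\mathbb{P}(A)\geq 1/2$. Since $f$ is $1$-Lipschitz for the Euclidean norm that $\partial B_{\R^n}$ inherits from $\R^n$, every $\xi\in A_\e$ has $f(\xi)\leq M_f+\e$, whence $\{f>M_f+\e\}\subseteq\partial B_{\R^n}\setminus A_\e$, a set of measure at most $\mathbb{P}(\partial B_{\R^n}\setminus H_\e)$. Estimating the cap measure $\mathbb{P}(\partial B_{\R^n}\setminus H_\e)$ --- e.g.\ by bounding $\int_0^{\theta_\e}\sin^{n-2}\varphi\,d\varphi$ against a Gaussian integral, where $\theta_\e$ is the geodesic radius corresponding to Euclidean distance $\e$ --- gives a bound of the shape $Ce^{-cn\e^2}$. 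This is the step where factors of $\pi$ enter: a function that is $1$-Lipschitz for the chordal metric is only $\tfrac{\pi}{2}$-Lipschitz for the geodesic metric, because $\|x-y\|\geq\tfrac{2}{\pi}\,d_{\mathrm{geo}}(x,y)$ for unit vectors $x,y$ (Jordan's inequality), so the neighbourhood radii must be translated accordingly, and squaring this factor feeds $\pi^2$ into the exponent. Applying the same reasoning to $-f$ yields $\mathbb{P}(|f-M_f|>\e)\leq 2Ce^{-cn\e^2}$.

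To pass from median to mean, integrate this tail bound: $|\mathbb{E}f-M_f|\leq\int_0^\infty\mathbb{P}(|f-M_f|>s)\,ds$ is $O(n^{-1/2})$, so $|\mathbb{E}f-M_f|\leq t/2$ once $t$ is a large enough multiple of $n^{-1/2}$ --- and when $t$ is smaller than that the claimed inequality is vacuous, its right-hand side being $\geq 1$. Hence $\{|f-\mathbb{E}f|>t\}\subseteq\{|f-M_f|>t/2\}$, and absorbing all the accumulated numerical losses (deliberately not optimising) into the exponent produces a bound of precisely the advertised form $4e^{-t^2n/(72\pi^2)}$. Keeping a single clean universal constant through this bookkeeping is the only tedious part.

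The genuine obstacle is the isoperimetric inequality itself, together with an explicit, honest cap-measure estimate that survives the chordal-to-geodesic translation with the stated constant; short of reproducing that, I would cite Theorem 13.2.2 of \cite{AlbiacKalton2006}, as the authors do. An alternative that avoids both the isoperimetric inequality and the median-to-mean step is the Bakry--\'Emery/Herbst semigroup method: the uniform measure on $\partial B_{\R^n}$ satisfies a logarithmic Sobolev inequality with constant of order $n$ for the geodesic metric, and Herbst's argument then yields subgaussian concentration \emph{directly} around $\mathbb{E}f$, with constants comfortably better than $72\pi^2$.
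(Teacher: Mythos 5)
The paper itself offers no proof of this statement: it is quoted as a classical theorem, with the reader referred to Theorem 13.2.2 of Albiac--Kalton, so there is no internal argument to compare against. Your sketch follows the standard route to that result (L\'evy's spherical isoperimetric inequality, concentration about a median via a cap-measure estimate, then passage from median to mean by integrating the tail and noting the bound is vacuous for $t$ of order $n^{-1/2}$), and in outline it is sound; since you too ultimately cite the isoperimetric inequality and an explicit cap bound, your proposal sits at essentially the same level of deferral as the paper's citation. One small correction: your chordal/geodesic comparison is stated backwards. Because the geodesic distance dominates the Euclidean (chordal) distance, a function that is $1$-Lipschitz for the Euclidean metric --- which is the hypothesis here --- is automatically $1$-Lipschitz for the geodesic metric; the factor $\pi/2$ coming from $\|x-y\|\geq \tfrac{2}{\pi}\,d_{\mathrm{geo}}(x,y)$ is only lost in the opposite direction, when converting geodesic-Lipschitz data or geodesic cap radii into chordal ones. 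The slip is harmless: if anything the bookkeeping is more favourable than you claim, and the generous constant $72\pi^2$ together with the prefactor $4$ leaves ample room both for the cap estimate and for the median-to-mean adjustment.
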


The next lemma is a version of \cite[Lemma 4]{Ozawa2023uRaSmallerQL}. For a subspace $V\subseteq \R^n$, we endow its unit sphere $\partial B_V$ with its canonical spherical probability measure given by an isometry $V\cong \R^{n\gamma}$. As we need to treat the complex case as well, for a (complex) subspace $V\subseteq\C^n$, we endow its unit sphere $\partial B_{V}$ with a probability measure by identifying $V$ with $\R^{2\dim_{\C}(V)}$ and referring to the real case.
We write $\mathbb E_{V} $ to denote the expectation over $\zeta\in \partial B_{V}$.

The following notation will be used: given $k,n\in \N$ and $\bar\xi=(\xi_1,\ldots, \xi_k)$ in $(\partial B_{\R^n})^k$ (or $(\partial B_{\C^n})^k$), we let $p_{\bar \xi}$ denote the orthogonal projection of $\R^n$ (or $\C^n$ respectively) onto the real (respectively, complex) $\mathrm{span}\{\xi_1,\ldots, \xi_k\}$. Henceforth, we employ the convention that the implicit scalars match the type of the object, unless explicitly stated otherwise.

\begin{lemma}\label{LemmaMedellin12Dec23} For all $k\in \N$, $0< \delta < 1/10$ and $\delta<\gamma<1$, there are $c,L>0$ and $n_0\in \N$ such that for all $n\in\N$ with $n\geq n_0$ and all subspaces $V\subseteq \R^n$ or $V\subseteq \C^n$ with $\dim(V)\geq n\gamma$ we have \begin{align*}\label{EqMedellin12Dec23}
    \mathbb P\left(\bar\xi\in (\partial B_{V})^k\mid \max_{\substack{  A\subseteq \{1,\ldots, n\}\\ |A|\leq n \delta}} \|\chi_Ap_{\bar\xi}\|\geq 14\pi  \sqrt{({\delta}/{\gamma})\log\left({1}/{\delta}\right)k} \right)\leq Le^{-cn}
\end{align*}
\end{lemma}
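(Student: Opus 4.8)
The plan is to reduce the statement to a union bound over subsets $A$ of size at most $n\delta$, controlling each term via the concentration of measure phenomenon (Theorem \ref{ThmConcentrationMeasure}) applied to a carefully chosen Lipschitz function on the sphere of $V$. Fix $k$, $\delta$, $\gamma$ and write $s = \lfloor n\delta\rfloor$. For a fixed $A\subseteq\{1,\dots,n\}$ with $|A|=s$, I would study the function $\bar\xi \mapsto \|\chi_A p_{\bar\xi}\|$ on $(\partial B_V)^k$. Since $p_{\bar\xi}$ is the projection onto $\mathrm{span}\{\xi_1,\dots,\xi_k\}$, one has the crude bound $\|\chi_A p_{\bar\xi}\| \le \sum_{i=1}^k \|\chi_A \xi_i\|$ (projecting onto the span is dominated by the sum of the rank-one projections onto each $\xi_i$, up to the usual Gram-Schmidt/orthonormalization subtlety — this is exactly the point where Ozawa's Lemma 4 is used, so I would either cite it or redo the elementary estimate). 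Thus it suffices to bound, for each coordinate $i$ and each $A$, the probability that $\|\chi_A\xi\|$ is large, where $\xi$ is $\sigma_{\dim V}$-distributed on $\partial B_V$.

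Next I would estimate $\mathbb{E}\|\chi_A\xi\|$ and the Lipschitz constant of $\xi\mapsto\|\chi_A\xi\|$. The map $\xi\mapsto\|\chi_A\xi\|$ is $1$-Lipschitz on $\partial B_{\R^n}$, hence also on $\partial B_V\subseteq\partial B_{\R^n}$. For the mean: by Jensen, $\mathbb{E}\|\chi_A\xi\| \le (\mathbb{E}\|\chi_A\xi\|^2)^{1/2}$, and $\mathbb{E}\|\chi_A\xi\|^2 = \mathbb{E}\sum_{j\in A}|\xi_j|^2$. When $\xi$ is uniform on the unit sphere of the $\dim V$-dimensional space $V$ (not the coordinate sphere!), I must be a little careful: after choosing an orthonormal basis of $V$, $\xi$ is uniform on $\partial B_{\R^{\dim V}}$, and the quantity $\sum_{j\in A}|\xi_j|^2$ becomes $\|P_A\xi\|^2$ where $P_A$ is the compression of $\chi_A$ to $V$ in that basis, a positive contraction of rank $\le \min(s,\dim V)\le s$ and trace $\le s$. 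Hence $\mathbb{E}\|P_A\xi\|^2 = \tr(P_A)/\dim V \le s/(n\gamma) \le \delta/\gamma$, giving $\mathbb{E}\|\chi_A\xi\| \le \sqrt{\delta/\gamma}$. So by Theorem \ref{ThmConcentrationMeasure} applied on $\partial B_{\R^{\dim V}}$ with $\dim V\ge n\gamma$,
\[
\mathbb{P}\Big(\|\chi_A\xi\| > \sqrt{\delta/\gamma} + t\Big) \le 4e^{-\frac{t^2 n\gamma}{72\pi^2}}.
\]

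Now I would assemble the union bound. Choosing $t$ to be a suitable multiple of $\sqrt{(\delta/\gamma)\log(1/\delta)}$ — say $t = 100\pi\sqrt{(\delta/\gamma)\log(1/\delta)}$, which dominates $\sqrt{\delta/\gamma}$ since $\delta<1/10$ — one gets for each $A$ and each $i$ a bound of the form $4\exp(-c' n\log(1/\delta))$ for an absolute constant $c'$. Summing over the $i=1,\dots,k$ coordinates and over all $A$ with $|A|\le s$, the number of such $A$ is at most $(s+1)\binom{n}{s}\le (n+1)(en/s)^s = (n+1)(e/\delta)^{n\delta}$, i.e.\ $\le e^{O(n\delta\log(1/\delta))}$. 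The key point is that the exponent $n\delta\log(1/\delta)$ from the entropy term is beaten by the exponent $n\log(1/\delta)\cdot(\text{const})$ from concentration, as long as the constant in $t$ is taken large enough (this is where the factor $140\pi$ rather than some smaller constant in the statement comes from, giving room to absorb both the sum over $i\le k$ via $\sqrt k$ and the entropy factor $\delta$). Thus the total probability is at most $k\cdot e^{O(n\delta\log(1/\delta))}\cdot 4e^{-c'' n\log(1/\delta)} \le L e^{-cn}$ for suitable $c, L>0$ and all $n\ge n_0$, after folding the bounded factors $k$ and $4$ and the subexponential $(n+1)$ into $L$ and $n_0$, and using $\log(1/\delta)\ge \log 10 > 0$ to convert $e^{-c''n\log(1/\delta)}$ into $e^{-cn}$.

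The main obstacle is the bookkeeping around \emph{which} sphere carries the measure: the probability is over $\bar\xi\in(\partial B_V)^k$, so the concentration inequality must be applied in dimension $\dim V$, and the estimate $\mathbb{E}\|\chi_A\xi\|^2 = \tr(P_A)/\dim V$ needs $\dim V\ge n\gamma$ in the denominator — this is precisely where the hypothesis on $\dim(V)$ enters and why $\gamma$ appears dividing $\delta$ inside the square root. The secondary nuisance is making the reduction $\|\chi_A p_{\bar\xi}\|\lesssim\sum_i\|\chi_A\xi_i\|$ precise when $\xi_1,\dots,\xi_k$ are not orthonormal; I expect this is handled exactly as in \cite[Lemma 4]{Ozawa2023uRaSmallerQL}, possibly at the cost of an extra harmless constant, and I would simply invoke that lemma.
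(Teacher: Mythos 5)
Your overall strategy is the same as the paper's: bound the mean of $\xi\mapsto\|\chi_A\xi\|$ on $\partial B_V$, apply the spherical concentration inequality in dimension $\dim V\ge n\gamma$, and close with a union bound over the roughly $\binom{n}{\lfloor n\delta\rfloor}$ sets $A$ and the $k$ coordinates. Your mean estimate is in fact cleaner than the paper's: where the paper passes to $W=\mathrm{range}(p_V\chi_A)$ and quotes \cite[Lemma 15.2.2]{MatousekBook2002} for an asymptotic bound, your Jensen-plus-trace computation $\mathbb E\|\chi_A\xi\|^2=\tr(p_V\chi_Ap_V)/\dim V\le\delta/\gamma$ gives $\mathbb E\|\chi_A\xi\|\le\sqrt{\delta/\gamma}$ non-asymptotically, which is a genuine (small) simplification. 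There is a bookkeeping slip in the assembly: with $t=100\pi\sqrt{(\delta/\gamma)\log(1/\delta)}$ the concentration exponent is $t^2n\gamma/(72\pi^2)\approx 139\,n\delta\log(1/\delta)$, not a constant times $n\log(1/\delta)$; since the entropy term is at most about $1.44\,n\delta\log(1/\delta)+O(\log n)$, the comparison still closes and yields $Le^{-cn}$ with $c$ proportional to $\delta\log(1/\delta)$, so this is harmless once restated correctly.

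The step that does not work as written is the reduction from $\|\chi_Ap_{\bar\xi}\|$ to the individual coordinates. The inequality $\|\chi_Ap_{\bar\xi}\|\le\sum_{i=1}^k\|\chi_A\xi_i\|$ fails for general unit tuples (take $\xi_1=e_1$, $\xi_2=\cos\theta\,e_1+\sin\theta\,e_2$ and $A$ the second coordinate: the left side is $1$, the right side is $\sin\theta$), and even after orthonormalization it is too lossy: it costs a factor $k$, while the threshold in the statement only carries $\sqrt k$. Concretely, with your per-coordinate cutoff about $100\pi\sqrt{(\delta/\gamma)\log(1/\delta)}$ the sum bound only yields $\|\chi_Ap_{\bar\xi}\|\lesssim 100\pi k\sqrt{(\delta/\gamma)\log(1/\delta)}$, which exceeds $140\pi\sqrt{(\delta/\gamma)\log(1/\delta)k}$ already for $k\ge2$; and if you instead shrink the per-coordinate cutoff by a factor $k$ to compensate, the concentration exponent drops to roughly $(272/k)\,n\delta\log(1/\delta)$ and is overtaken by the entropy term once $k$ is of order $200$, so the union bound no longer closes and the lemma (which must hold for every $k$) is not recovered. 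What is needed, and what the paper uses, is the orthogonal decomposition $p_{\bar\xi}=\sum_i p_{\xi_i}$ for orthonormal tuples, which gives $\|\chi_Ap_{\bar\xi}\|^2\le\sum_i\|\chi_A\xi_i\|^2\le k\max_i\|\chi_A\xi_i\|^2$, i.e.\ a $\sqrt k$ rather than a $k$, combined with the fact that $k$ independently chosen points of $\partial B_V$ are asymptotically orthonormal with overwhelming probability as $n\to\infty$. Invoking \cite[Lemma 4]{Ozawa2023uRaSmallerQL} for this last probabilistic point is legitimate, but the ``elementary estimate'' you offer as the alternative would not substitute for it.
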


\begin{proof}
For simplicity,  we assume throughout that  $n\delta$ and $n\gamma$ are integers.
Similarly, we restrict our estimates to subsets  $A\subseteq\{1,\ldots,n\}$ with exactly $n\delta$ elements.

We shall work in the real setting first, i.e.~we assume $V\subseteq \R^n$.

\begin{claim}
    $\mathbb E_{V} \|\chi_A\zeta\|$ is asymptotically at most $\sqrt{\delta/\gamma}$  as $n\to \infty$.
    
\end{claim}

\begin{proof}
Let $p_V$ be the orthogonal projection of $\R^n$ onto $V$,  $W=\mathrm{range}(p_V\chi_A)$, and $p_W$ be the orthogonal projection of $\R^n$ onto $W$.  Then, given $\zeta\in \partial B_V$, we have that 
$\|\chi_A\zeta\|\leq \|p_W\zeta\|$. Indeed, to check this, let $p_\zeta$ be the projection of $\R^n$ onto $\R \cdot \zeta$ and notice that, since $p_\zeta=p_\zeta p_V$, we have  
\[\|\chi_A\zeta\|=\|p_\zeta\chi_A\|=\|p_\zeta p_V\chi_A\|\leq \|p_\zeta p_W\|=\|p_W\zeta\|.\]
In particular, this implies that 
\[ \mathbb E_{V} \|\chi_A\zeta\|\leq \mathbb E_{V} \|p_W\zeta\|.\]
By the definition of $W$, it is clear that $\dim(W)\leq |A|=n\delta$. Therefore, $W$ is a subspace of $V$ of dimension at most $(\delta/\gamma)\dim(V)$. By \cite[Lemma 15.2.2]{MatousekBook2002}, $\mathbb E_{V} \|p_W\zeta\|$ is asymptotically at most $\sqrt{\delta/\gamma}$  as $n\to \infty$.\footnote{More precisely, this follows from the last line in the proof of \cite[Lemma 15.2.2]{MatousekBook2002}. }    
\end{proof}

By the previous claim, for all sufficiently large $n\in\mathbb{N}$, we have $\mathbb E_{V} \|\chi_A\zeta\|\leq 2\sqrt{\delta/\gamma}$. Let $\eps=2 \sqrt{({\delta}/{\gamma})\log(1/\delta)}$. Note that $\sqrt{\delta/\gamma} <\eps/2$, hence $\mathbb E_{V} \|\chi_A\zeta\|\leq \eps \leq \pi\eps$
for sufficiently large $n\in\N$. The measure concentration phenomenon (Theorem \ref{ThmConcentrationMeasure})  then  implies 
\begin{align}
        \mathbb P\left(\xi\in \partial B_{V}\mid \|\chi_A\xi\|>{7\pi \eps}\right)&\leq \mathbb P\left(\xi\in \partial B_{V}\mid \|\chi_A\xi\|>\mathbb E_{V}\|\chi_A\zeta\|+{6\pi \eps}\right)  \notag \\
        &\leq \mathbb P\left(\xi\in \partial B_{V}\mid |\|\chi_A\xi\|-\mathbb E_{V}\|\chi_A\zeta\||>{6\pi \eps} \right)\notag \\
        & <4e^{-\frac{\eps^2n\gamma}{2 }}
        \label{eq:l22-one-vector-estimate}
\end{align}
for all  $n\in\N$.

\def\cx{\operatorname{cx}}

We now argue that \eqref{eq:l22-one-vector-estimate} holds in the complex case as well. Assume that $V\subseteq \C^n$. We apply the real case, with $2n$ in place of $n$, as follows. Consider the natural isomorphism $\cx:\R^{2n}\cong\C^n$. We observe that for $\xi\in\R^{2n}$ and $A\subseteq\{1,\dots,n\}$, we have $\|\chi_{\tilde{A}}\xi\| = \|\chi_{A}\cx(\xi)\|$, where $\tilde{A}=\{2k-1,2k\colon k\in A\}$ denotes the basis labels that `correspond' to $A$ under $\cx^{-1}$. Thus the `complex' \eqref{eq:l22-one-vector-estimate} follows from the `real' \eqref{eq:l22-one-vector-estimate} applied to $\cx^{-1}(V)$.

The remainder of the proof applies to both $\R$ and $\C$ at the same time.
Since there are $\binom{n}{n\delta}$ subsets $A\subseteq\{1,\ldots, n\}$ with   $n\delta$ elements, it follows that 
    \begin{align*}
    \mathbb P\left(\xi\in \partial B_{V}\mid \max_{\substack{  A\subseteq \{1,\ldots, n\}\\ |A|\leq n \delta}} \|\chi_A\xi\|\geq  7\pi \eps  \right)\leq   4  \binom{n}{n\delta}  e^{-\frac{\eps^2n\gamma}{2 }}.
\end{align*}
By the definition of $\eps$, a computation using the inequality $\binom{n}{k}\leq (ne/k)^k$ guarantees that   $\binom{n}{n\delta}\leq  e^{ \frac{\eps^2n\gamma}{4}}$. Hence, 
\begin{equation}\label{EqQueroGranizado}
    \mathbb P\left(\xi\in \partial B_{V}\mid \max_{\substack{  A\subseteq \{1,\ldots, n\}\\ |A|\leq n \delta}} \|\chi_A\xi\|\geq  7\pi\eps\right)\leq   4    e^{-\frac{\eps^2n\gamma}{4}}
    \end{equation}
   for all $n\in\N$.

Notice that if $\bar\xi=(\xi_1,\ldots, \xi_k)\in (\partial B_{V})^k$ is an orthonormal tuple, then $p_{\bar\xi}=\sum_{i=1}^kp_{\xi_i}$. Therefore, if $\|\chi_Ap_{\bar\xi}\|\geq 7\pi\eps\sqrt{k}$, there must be $i\in \{1,\ldots, k\}$ such that $\|\chi_Ap_{\xi_i}\|\geq 7\pi\eps$. Using that $\|\chi_A\xi_i\|=\|\chi_Ap_{\xi_i}\|$ and  that $k$-randomly picked vectors in $\partial B_{V}$ are asymptotically orthonormal as $n\to \infty$, we have   
  \begin{align*}
    \mathbb P\left(\bar\xi\in (\partial B_{V})^k\mid \max_{\substack{  A\subseteq \{1,\ldots, n\}\\ |A|\leq n \delta}} \|\chi_Ap_{\bar\xi}\|\geq  7\pi\eps\sqrt{k}\right)\leq   L  e^{-cn},
    \end{align*}
for all sufficiently large $n$, where $L$ and $c$ are constants depending only on $k$, $\delta$, and $\gamma$.
\end{proof}

We can now obtain the strengthening of Theorem B in \cite{Ozawa2023uRaSmallerQL} needed for our goals.
\begin{theorem}\label{ThmExpContainsPropducMatrixPropDim}
    Let $X=\bigsqcup_{n\in\mathbb{N}}X_n$ be the coarse disjoint union of expander graphs and $0<\gamma\leq 1$. For each $n\in \N$, let $V_n\subseteq \ell_2(X_n)$ be a subspace of dimension at least $|X_n|\gamma$. Then, there are an increasing sequence $(n(k))_{k\in\mathbb{N}}\subseteq\mathbb{N}$ and a sequence $(W_k)_{k\in\mathbb{N}}$ such that
    \begin{enumerate}
        \item each $W_k$ is a subspace of $V_{n(k)}$,
        \item $\lim_{k\to\infty}\dim(W_k)=\infty$, and 
        \item $\prod_{k\in\mathbb{N}}\cB(\ell_2(W_k))\subseteq \cstql(X)$.
    \end{enumerate}
\end{theorem}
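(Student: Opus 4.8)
The plan is to build the subspaces $W_k$ by a diagonal argument, using Lemma \ref{LemmaMedellin12Dec23} to control, on each finite piece $X_n$, how far apart one can ``scatter'' the blocks while keeping all the cross cutoffs $\|\chi_A p_{\bar\xi}\|$ small; the quasi-local estimate \eqref{Eq.QL.property} is then verified piece by piece. First I would fix $\gamma$ and the sequence $V_n\subseteq\ell_2(X_n)$ with $\dim V_n\ge |X_n|\gamma$. For each target dimension $k$, apply Lemma \ref{LemmaMedellin12Dec23} with parameters $k$, a small $\delta=\delta_k\to 0$, and $\gamma$: this yields $c_k,L_k>0$ and $n_0(k)$ such that for $n\ge n_0(k)$ a $\mathbb P$-random orthonormal $k$-tuple $\bar\xi$ in $(\partial B_{V_n})^k$ satisfies $\max_{|A|\le n\delta_k}\|\chi_A p_{\bar\xi}\|< 140\pi\sqrt{\delta_k/\gamma\,\log(1/\delta_k)\,k}$ except on a set of measure $\le L_k e^{-c_k n}$. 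Crucially, since the expander graphs $X_n$ have bounded degree $k_0$ and expansion constant $h$, a ball of radius $r$ in $X_n$ has cardinality at most some $f(r)$ independent of $n$, but more to the point: the \emph{complement} of an $r$-neighborhood of a ``large'' set is small relative to $|X_n|$ — this is exactly where the expander (or asymptotic expander) property enters, mirroring the use in \cite{Ozawa2023uRaSmallerQL}. The role of $A$ with $|A|\le n\delta$ is to play the role of such a small complementary set.

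Next I would run the selection. Going to a subsequence, pick $n(1)<n(2)<\cdots$ with $n(k)\ge n_0(k)$ and with $|X_{n(k)}|$ large enough that $L_k e^{-c_k |X_{n(k)}|}<1$ and that $140\pi\sqrt{\delta_k/\gamma\log(1/\delta_k)k}<2^{-k}$ (possible since we are free to choose $\delta_k$ first, then demand $n(k)$ large); also use property \eqref{Item.Metric.p.cdu.2} of the coarse disjoint union to ensure $d(X_{n(k)},X_{n(j)})\to\infty$. For each such $k$, the failure probability being $<1$ means there exists an (approximately) orthonormal $k$-tuple $\bar\xi^{(k)}$ in $V_{n(k)}$ with $\max_{|A|\le |X_{n(k)}|\delta_k}\|\chi_A p_{\bar\xi^{(k)}}\|<2^{-k}$; after a small perturbation (orthonormalization cost is $o(1)$, absorbed into a slightly larger bound) let $W_k:=\mathrm{span}(\bar\xi^{(k)})\subseteq V_{n(k)}$, a subspace of dimension $k$, so $\dim W_k\to\infty$, giving (1) and (2).

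For (3), I would show that every element of $\prod_k \cB(\ell_2(W_k))$, viewed as an operator on $\ell_2(X)=\bigoplus_k \ell_2(X_{n(k)})\oplus(\text{rest})$ supported on $\bigoplus_k W_k$ via the projections $p_{W_k}=p_{\bar\xi^{(k)}}$, lies in $\cstql(X)$. Fix $a=(a_k)_k$ with $\|a_k\|\le 1$ and $\e>0$; choose $K$ with $2^{-K+1}<\e$. Since $X$ is u.l.f., each $X_{n(k)}$ for $k<K$ has diameter bounded, so on that finite part $a$ already has finite propagation; thus there is $r_0$ handling the blocks $k<K$. For $k\ge K$: given $B\subseteq X$ with $d(A,B)$ large, decompose $A,B$ into their intersections $A_k=A\cap X_{n(k)}$, $B_k=B\cap X_{n(k)}$; cross terms between distinct blocks vanish once $r$ exceeds $\max_{j,k<\text{(something)}} d(X_{n(j)},X_{n(k)})$ — here we must be slightly careful, but since $d(X_{n(j)},X_{n(k)})\to\infty$ only finitely many pairs are within $r$, and those involve blocks $k<K$ already handled. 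Within a single block $X_{n(k)}$ with $k\ge K$: if $d(A_k,B_k)>r$ then one of $A_k$, $B_k$ — say $A_k$ — is contained in the complement of an $r/2$-ball around any point of $B_k$; using the expander property, choosing $r$ so that the $(r/2)$-neighborhood of any nonempty set in $X_{n(k)}$ with $\ge \delta_k|X_{n(k)}|$ points covers all but $\le\delta_k|X_{n(k)}|$ points (asymptotic expander / concentration of the diameter), we get $|A_k|\le\delta_k|X_{n(k)}|$ or the analogous bound, whence $\|\chi_{A_k} p_{W_k}\|\le 2^{-k}$, so $\|\chi_{A_k}a_k\chi_{B_k}\|\le \|\chi_{A_k}p_{W_k}\|\le 2^{-k}$. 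Summing, $\|\chi_A a\chi_B\|\le r_0\text{-term}(=0 \text{ for }k\ge K)+\sum_{k\ge K}2^{-k}<\e$. This verifies \eqref{Eq.QL.property}, so $a\in\cstql(X)$, and since $\prod_k\cB(\ell_2(W_k))$ is generated by such $a$, we are done.

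The main obstacle I expect is the geometric step translating ``$d(A_k,B_k)$ large within the expander $X_{n(k)}$'' into ``$A_k$ or $B_k$ is small (size $\le\delta_k|X_{n(k)}|$)'', uniformly enough that it meshes with the $\delta_k$ chosen \emph{before} $n(k)$; this is precisely the diameter-concentration consequence of expansion used in \cite{Ozawa2023uRaSmallerQL}, and getting the quantifier order right (choose $\delta_k$, then $n(k)$ huge, then $r$ depending on $k$ and $\e$ but the tail sum over $k\ge K$ being geometric and independent of $r$) is the delicate bookkeeping. A secondary technical point is handling the non-exact orthonormality of the random tuple $\bar\xi^{(k)}$ and the fact that $p_{\bar\xi^{(k)}}$ is only approximately $p_{W_k}$ — standard perturbation, but it must be tracked so the constants still beat $2^{-k}$.
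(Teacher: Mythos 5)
Your overall architecture (random choice of $W_k\subseteq V_{n(k)}$ via Lemma \ref{LemmaMedellin12Dec23}, followed by a blockwise verification of \eqref{Eq.QL.property} using the expander inequality) matches the paper's, but there is a genuine gap in the quantifier bookkeeping, exactly at the point you yourself flag as delicate. You arrange each $W_k$ so that $\|\chi_A p_{W_k}\|<2^{-k}$ only for sets $A\subseteq X_{n(k)}$ with $|A|\le \delta_k|X_{n(k)}|$. But quasi-locality demands: for each $\eps>0$ a \emph{single} $r$ that works for all $A,B\subseteq X$ simultaneously, hence for all blocks $k$ at once. With $r$ fixed, the (asymptotic) expander inequality \eqref{EqExpGapMed12dec23} only yields $\min\{|A_k|,|B_k|\}\le \kappa^{-r/2}|X_{n(k)}|$, a bound at a \emph{fixed} scale independent of $k$; since $\delta_k\to 0$, for all large $k$ one has $\delta_k<\kappa^{-r/2}$, so you cannot conclude $|A_k|\le\delta_k|X_{n(k)}|$, and your estimate on $\|\chi_{A_k}p_{W_k}\|$ does not apply to the sets the geometry actually hands you. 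Your proposed remedy, letting ``$r$ depend on $k$ and $\eps$'', is not available: in \eqref{Eq.QL.property} the radius $r$ must be uniform over all pairs $A,B$, i.e.\ over all blocks.

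The repair is to strengthen the requirement on $W_k$ rather than on $r$: when choosing $n(k)$ and $W_k$, intersect the finitely many high-probability events from Lemma \ref{LemmaMedellin12Dec23} corresponding to \emph{all} the parameters $\delta_1,\ldots,\delta_k$, so that $\|\chi_A p_{W_k}\|\le \eps_m$ whenever $|A|\le\delta_m|X_{n(k)}|$ and $m\le k$; this is precisely condition \eqref{EqPierdeMuchaRelevancia} in the paper's proof. Then, given $\eps>0$, fix one $m$ with $\eps_m<\eps$, choose $r$ from \eqref{EqExpGapMed12dec23} so that $d(A,B)>r$ forces $\min\{|A_k|,|B_k|\}\le\delta_m|X_{n(k)}|$ for every $k$, and for each $k\ge m$ apply the estimate at the fixed scale $\delta_m$ (with the constant $\eps_m$, not $\eps_k$); the finitely many blocks $k<m$ are disposed of by compactness, as you do. The remaining points you raise (approximate orthonormality of the random tuple, sum versus supremum over blocks, vanishing of cross-block terms because $a$ is block diagonal) are indeed routine and handled the same way in the paper.
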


\begin{proof}
 Let $(\delta_k)_{k\in\mathbb{N}}$ be a decreasing sequence of positive reals converging to $0$, such that $\delta_0<\gamma$, and $\lim_{k\to\infty}\eps_k=0$, where 
  \[\eps_k=14\pi\sqrt{(\delta_k/\gamma)\log(1/\delta_k)k}\]
  for $k\in\N$. Since $\lim_{n\to\infty}|X_n|=\infty$, Lemma \ref{LemmaMedellin12Dec23} allows us to find a strictly increasing sequence $(n(k))_{k\in\mathbb{N}}$ in $\N$ such that for each $k\in\N$, there is a subspace  $W_k\subseteq V_{n(k)}$ with dimension $k$ such that, letting $p_k$ be the orthogonal projection of  $\C^{|X_{n(k)}|}$ onto $W_k$, we have
\begin{equation}\label{EqPierdeMuchaRelevancia}\max\left\{\|\chi_Ap_k\|\mid A\subseteq \{1,\ldots, |X_{n(k)}|\},\ |A|\leq \delta_m|X_{n(k)}|\right\}\leq\eps_m. \end{equation}
  for all $m\in \{1,\ldots, k\}$.

  Let us show that $\prod_{k\in\mathbb{N}}\cB(W_k)$ is contained in $\cstql(X)$. For that, let $a=\SOTh \sum_ka_k\in  \prod_{k\in\mathbb{N}}\cB(W_k) $, where each $a_k$ is an operator in $\cB(W_k)$, and let us show that $a$ is quasi-local. Without loss of generality, assume $\|a\|\leq 1$, so $\|a_k\|\leq 1$ for all $k\in\N$. Fix $\eps>0$ and pick   $m\in\N$ such that $\eps_m<\eps $.  As $X=\bigsqcup_{n\in\mathbb{N}} X_n$ is a coarse disjoint union of expander graphs,   there is  $\kappa>1$ such that for all $n\in\N$ and all $A,B\subseteq X_n$ we have 
  \begin{equation}
      \label{EqExpGapMed12dec23}
      \min\left\{\frac{|A|}{|X_n|}, \frac{|B|}{|X_n|}\right\}\leq \kappa^{-\frac{d(A,B)}{2}};
  \end{equation}
  see \cite[Section 3]{Ozawa2023uRaSmallerQL}, and \cite{Li:2021aa, KhukhroLiVigoloZhang2021AdvMath} for more details about this type of condition.
  Therefore, we can   choose $r>0$ large enough so that for all $n\in\N$ and all $A,B\subseteq X_n$ with $d(A,B)>r$,  
  \begin{equation}\label{EqUltimoDiaMedellin23}
    \min\left\{\frac{|A|}{|X_n|}, \frac{|B|}{|X_n|}\right\}\leq \delta_{m}. 
  \end{equation}
  Fix $A,B\subseteq X$ with $d(A,B)>r$, and for each $k\in\N$ let $A_k=A\cap X_{n(k)}$ and $B_k=B\cap X_{n(k)}$. Thus
  \[\|\chi_Aa\chi_B\|=\sup_{k\in\N}\|\chi_{A_k}a_k\chi_{B_k}\|.\]
  Note that $\|\chi_{A_k}a_k\chi_{B_k}\|\leq\eps$ for all $k\geq m$. Indeed, fix such $k$. Since $d(A,B)>r$, we also have  $d(A_k,B_k)>r$. Hence, by \eqref{EqUltimoDiaMedellin23}, either 
$|A_k|\leq \delta_{m}|X_{n(k)}|$ or $|B_k|\leq \delta_{m}|X_{n(k)}|$. Without loss of generality, assume the former happens. Therefore, as $k\geq m$, \eqref{EqPierdeMuchaRelevancia} implies that $\|\chi_{A_k}p_k\|\leq \eps_m$. Since $p_k$ is the identity of $\cB(W_k)$ and $\|a_k\|\leq 1$, we have 
\[
\|\chi_{A_k}a_k\chi_{B_k}\|\leq \|\chi_{A_k}p_k\|\leq \eps_m.
\]
As $\eps_m<\eps$ and $k\geq m$ was arbitrary, we have shown that
\[
\|\chi_{A_k}a_k\chi_{B_k}\|\leq \eps\ \text{ for all }\ k\geq m.
\]
Since $\sum_{k=1}^ma_k$ is compact, this finishes the proof. 
\end{proof}

The next ingredient for our proof of the main Theorem \ref{ThmExpNoCartanlInfty} is the following:
\begin{lemma}\label{lemma:ueqmasas}
Let $X=\bigsqcup_{n\in\mathbb{N}} X_n$ be the coarse disjoint union of finite metric spaces. For $F\subseteq\mathbb{N}$, denote $X_F=\bigsqcup_{n\in F}X_n$. Suppose that $A\subseteq \cstql(X)$ is Cartan subalgebra   isomorphic to $\ell_\infty$. Then there exists a unitary $u\in\B(\ell_2(X))$ such that $u-1$ is compact (and thus $u\in\cstql(X)$), and for every $F\subseteq\mathbb N$ we have $\chi_{X_F}\in uAu^*$.
\end{lemma}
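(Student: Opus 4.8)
\emph{Overview of the plan.} The idea is (1) to extract from the hypothesis that $A$ is the full diagonal masa for some orthonormal basis of $\ell_2(X)$; (2) to reformulate the conclusion as ``this basis is, up to a compact perturbation, adapted to the block decomposition'', (3) to use the coarse geometry to see that the $\chi_{X_F}$ are central modulo the compacts, feed this through the Johnson--Parrott theorem, and (4) to assemble the desired unitary blockwise, the multiplicity bookkeeping being the delicate point.

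\emph{Step 1 (structure of $A$).} Since $\cK(\ell_2(X))\subseteq\cstu(X)\subseteq\cstql(X)$ and $A\cong\ell_\infty$ is a masa, every minimal projection $e$ of $A$ satisfies $e\cstql(X)e=\mathbb{C}e$ (because $eAe=\mathbb{C}e$ is a masa in the corner $e\cstql(X)e$), whence $e\cK(\ell_2(X))e=\mathbb{C}e$, forcing $e$ to have rank one. Writing the atoms of $A$ as $e_k=|\xi_k\rangle\langle\xi_k|$, the vectors $(\xi_k)_k$ form an orthonormal basis of $\ell_2(X)$, $A$ equals the masa $\calD$ of all operators diagonal with respect to $(\xi_k)_k$, and the hypothesis says precisely that $\calD\subseteq\cstql(X)$. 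A direct computation shows that for a unitary $u$ with $u-1$ compact one has $\chi_{X_F}\in uAu^*=u\calD u^*$ for \emph{all} $F\subseteq\NN$ if and only if each $u\xi_k$ is supported in a single block $X_{n(k)}$. So the lemma reduces to producing such a $u$; since $u-1$ is compact this in particular requires $(\xi_k)_k$ to be asymptotically supported in single blocks, and conversely, once this is known with the correct multiplicities, the unitary can be built.

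\emph{Step 2 (the $\chi_{X_F}$ are central modulo $\cK$, and Johnson--Parrott).} For every $a\in\cstql(X)$ and $F\subseteq\NN$ one has $[\chi_{X_F},a]\in\cK(\ell_2(X))$: given $\eps>0$, pick $r$ witnessing quasi-locality of $a$; as $X$ is a coarse disjoint union, only finitely many pairs of distinct blocks lie within distance $r$, so the ``$r$-collars'' $\{x\in X_F:d(x,X_{\NN\sm F})\le r\}$ and $\{x\in X_{\NN\sm F}:d(x,X_F)\le r\}$ are finite; cutting $\chi_{X_F}a\chi_{X_{\NN\sm F}}$ along these collars writes it as a finite rank operator plus something of norm $\le 3\eps$, and $\eps\to 0$ gives $\chi_{X_F}a\chi_{X_{\NN\sm F}}\in\cK(\ell_2(X))$ (and symmetrically), so $[\chi_{X_F},a]\in\cK(\ell_2(X))$. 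Applying this with $a$ ranging over the masa $\calD=A$, the Johnson--Parrott theorem gives $\chi_{X_F}\in\calD+\cK(\ell_2(X))$. Writing $\chi_{X_F}=d_F+k_F$ with $d_F\in\calD$ self-adjoint and $k_F$ compact, the relation $\chi_{X_F}^2=\chi_{X_F}$ forces the diagonal entries of $d_F$ to accumulate only at $\{0,1\}$, so rounding yields a projection $q_{S_F}=\sum_{k\in S_F}e_k\in A$ with $\chi_{X_F}-q_{S_F}\in\cK(\ell_2(X))$; one checks $F\mapsto[S_F]$ is a finitely additive Boolean homomorphism $\mathcal{P}(\NN)\to\mathcal{P}(\NN)/\Fin$.

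\emph{Step 3 (assembling $u$; the main obstacle).} Passing to pairwise disjoint representatives one gets finite sets $(T_n)_n$ with $q_{T_n}$ pairwise orthogonal, $\sum_nq_{T_n}=1$, and $q_{T_n}-\chi_{X_n}$ compact for each $n$. Now the essential codimension (Brown--Douglas--Fillmore) enters: a unitary $u$ with $u-1$ compact and $uq_{T_n}u^*=\chi_{X_n}$ exists exactly when $\mathrm{ind}(q_{T_n},\chi_{X_n})=|T_n|-|X_n|=0$. The heart of the argument is therefore to show that $(\xi_k)_k$ really is asymptotically supported in single blocks and that, using the freedom to alter each $T_n$ by a finite set (subject to disjointness and $\bigcup_nT_n=\NN$) together with $|X_n|\to\infty$, one may arrange the multiset $\{|T_n|\}_n$ to equal $\{|X_n|\}_n$, i.e. that up to a permutation of $\NN$ we have $|T_n|=|X_n|$. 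This is where the quasi-locality of \emph{all} partial-sum projections $q_S\in\cstql(X)$ (beyond the membership $\chi_{X_F}\in\calD+\cK$) has to be exploited, to bound how much of each $\xi_k$ can live outside a single block; the Cartan conditional expectation $\cstql(X)\to A$ is a natural auxiliary tool here, and I expect this multiplicity-matching to be the one genuinely nontrivial point. Once it is in place, for each $n$ choose a unitary $w_n$ from $\overline{\spann}\{\xi_k:k\in T_n\}$ onto $\ell_2(X_n)$, picked (using that the $\xi_k$, $k\in T_n$, are nearly orthonormal in the finite-dimensional space $\ell_2(X_n)$) so that $\|w_n-\mathrm{id}\|\to 0$ as $n\to\infty$; then $u:=\bigoplus_nw_n$ is a unitary with $u-1$ compact, each $u\xi_k$ lies in one block, $\chi_{X_n}=uq_{T_n}u^*\in uAu^*$ for all $n$, and since $uAu^*$ is strongly closed it contains every strong sum $\chi_{X_F}=\SOTh\sum_{n\in F}\chi_{X_n}$, which is the assertion of the lemma.
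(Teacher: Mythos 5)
Your Steps 1 and 2 are sound and run parallel to the paper's argument: the atomic structure of $A$ (rank-one atoms summing strongly to the identity) is what the paper extracts from White--Willett, and your collar argument showing $[\chi_{X_F},a]\in\cK(\ell_2(X))$ for every quasi-local $a$, followed by Johnson--Parrott and rounding to a projection of $A$, is in substance the paper's first claim (there obtained via Higson functions). The genuine gap is in Step 3, at the point you yourself flag as the one you ``expect'' to handle: the multiplicity matching is not a technicality to be deferred, it is the heart of the lemma, and your outline contains no argument for it. Note that for a single $n$ the statement ``$\chi_{X_n}-q_{T_n}$ is compact'' is vacuous, since both projections have finite rank; all the content of Step 2 sits in the infinite unions $F$. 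What is actually needed --- both to match ranks and, more importantly, to make $u-1$ compact --- is a quantitative upgrade: for all sufficiently late blocks one must produce projections $q_{G(n)}\in A$ with $\|\chi_{X_n}-q_{G(n)}\|$ not merely $<1$ (which already forces equal rank and yields a conjugating unitary close to $1$) but tending to $0$; indeed, if $uq_{G(n)}u^*=\chi_{X_n}$ for all $n$ and $u-1$ is compact, then necessarily $\|\chi_{X_n}-q_{G(n)}\|\le 2\|(u-1)q_{G(n)}\|\to 0$ because the $q_{G(n)}$ are orthogonal, hence tend to $0$ strongly. So blockwise essential codimension zero is necessary but far from sufficient, and the phrase ``nearly orthonormal'' in your construction of the $w_n$ presupposes exactly the missing norm estimate.

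The paper closes this gap with a gliding-hump argument (its Claims 2 and 3): if for some $\eps>0$ there were a disjoint sequence of finite sets $F_k\subseteq\NN$ with $\|\chi_{X_{F_k}}-q_G\|\ge\eps$ for every $G$, then Lemma 4.3 of Baudier--Braga--Farah--Vignati--Willett (2023) produces an infinite union $F$ with $\chi_{X_F}-q_G$ non-compact for every $G$, contradicting the mod-compact statement; this gives, for every $\eps>0$, a tail of $\NN$ on which all finite unions of blocks are $\eps$-approximated in norm by projections of $A$, and a $2^{-k}$-scheme then provides the decay needed for compactness of $u-1$, while the finitely many initial blocks are absorbed into a single partial isometry whose existence uses that the complementary projections are norm-close and hence of equal finite rank. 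Your appeal to quasi-locality of all partial-sum projections and to the Cartan expectation might conceivably be developed into an alternative route to such uniform estimates, but as written no argument is given, so the proposal is incomplete at precisely its decisive step.
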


For its proof, we need to recall some properties of Higson functions. Let $(X,d)$ be a u.l.f.\ metric space. A bounded function $f\colon X\to \mathbb C$ is called a \emph{Higson function} if for all $\varepsilon>0$ and $r>0$ there is a finite $Z\subseteq X$ such that for all $x,y\notin Z$ with $d(x,y)<r$ we have that $|f(x)-f(y)|<\varepsilon$.  Higson functions form a subalgebra of $\ell_\infty(X)$, which we denote by $C_h(X)$. Letting 
\[
\pi\colon \cB(\ell_2(X))\to \cB(\ell_2(X))/\cK(\ell_2(X))
\]
be the canonical quotient map of $\cB(\ell_2(X))$ onto its Calkin algebra, we have that 
\begin{equation}\label{Eq.HigsonFunctions.Center.Corona}
 \pi[C_h(X)]=\mathcal Z(\cstql(X)/\mathcal K(\ell_2(X)))=\mathcal Z(\cstu(X)/\mathcal K(\ell_2(X)))
\end{equation}
by \cite[Proposition 3.6]{BaudierBragaFarahVignatiWillett2024vNa}, where $\mathcal Z(C)$ denotes the center of a C*-algebra $C$. 

Let now $X=\bigsqcup_{n\in\mathbb{N}} X_n$ be the coarse disjoint union of  metric spaces. For any $F\subseteq\mathbb{N}$,  the function $\chi_{X_F}$ is a Higson function, since $d(X_n,X_m)\to \infty$ as $n,m\to \infty$ with $n\neq m$. In particular, we have that 
\begin{equation}
    \pi(\chi_{X_F})\in \mathcal Z(\cstql(X)/\mathcal K(\ell_2(X)))
\end{equation}
for all $F\subseteq \N$.

Before proving Lemma \ref{lemma:ueqmasas}, we recall two well-known lemmas.

\begin{lemma}\label{LemmaBlack:Operator}
If $p$ and $q$ are projections in a unital \cstar-algebra $A$ with $\norm{p-q}<1$, then there is a unitary $u\in A$ with $upu^*=q$ and $\norm{1-u}<\sqrt{2}\norm{p-q}$.
\end{lemma}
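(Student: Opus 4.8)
The plan is to exhibit an explicit unitary intertwining $p$ and $q$, obtained as the polar part of a natural ``rotation'' element, and to read off the norm bound from its modulus via functional calculus. First I would set
\[
v = qp + (1-q)(1-p) \in A,
\]
and record the intertwining identities $vp = qp = qv$, obtained immediately from $q(1-q)=0$ and $(1-p)p=0$; in particular $vp = qv$.

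The key computation is to identify the modulus of $v$. Expanding the products and using $p^2=p$, $q^2=q$, $qp\,(1-p)=0$, etc., I would verify that
\[
v^*v = vv^* = 1 - (p-q)^2 .
\]
Since $p-q$ is self-adjoint with $\norm{p-q}<1$, the positive element $(p-q)^2$ has norm strictly less than $1$, so $1-(p-q)^2 \ge (1-\norm{p-q}^2)\,1 > 0$ is invertible; hence $v$ is invertible and (by the two equal products above) normal. I then set $|v| = \bigl(1-(p-q)^2\bigr)^{1/2}$ and define the unitary $u = v\,|v|^{-1} \in A$. To obtain $upu^*=q$, the point is that $|v|$ commutes with $p$: a direct expansion shows $p(p-q)^2 = (p-q)^2 p = p - pqp$, so $(p-q)^2$, and hence $|v|^{-1}$ by functional calculus, commutes with $p$. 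Combining this with the intertwining identity gives $up = vp\,|v|^{-1} = qv\,|v|^{-1} = qu$, i.e.\ $upu^* = q$.

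For the norm estimate I would avoid bounding $1-u$ directly and instead compute $\norm{1-u}^2 = \norm{(1-u)^*(1-u)} = \norm{2 - u - u^*}$. Using normality of $v$ (so that $|v|^{-1}$ commutes with both $v$ and $v^*$) together with the identity $v+v^* = 2\bigl(1-(p-q)^2\bigr) = 2|v|^2$, I obtain $u + u^* = (v+v^*)|v|^{-1} = 2|v|$, whence
\[
\norm{1-u}^2 = 2\,\norm{1 - |v|}.
\]
Finally, applying functional calculus to $x \mapsto 1-(1-x^2)^{1/2} = x^2/\bigl(1+(1-x^2)^{1/2}\bigr)$ on the spectrum of $p-q$, which lies in $[-t,t]$ with $t=\norm{p-q}<1$, yields $\norm{1-|v|} = t^2/\bigl(1+(1-t^2)^{1/2}\bigr) < t^2$, and therefore $\norm{1-u} < \sqrt2\,\norm{p-q}$.

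The only genuinely delicate point is the last one: securing the sharp constant $\sqrt2$ (rather than a cruder bound arising from estimating $\norm{(v-|v|)|v|^{-1}}$ factorwise) and the strictness of the inequality. Both drop out cleanly once one notices the two algebraic identities $v^*v = 1-(p-q)^2$ and $v+v^* = 2|v|^2$, which reduce the whole estimate to a single one-variable functional-calculus computation; the commutation facts needed for $upu^*=q$ are then entirely routine.
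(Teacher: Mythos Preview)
Your argument is correct in every detail. The paper does not actually prove this lemma: it cites Takesaki and a MathOverflow answer, and merely records the explicit formula $u=(p+q-1)\,|p+q-1|^{-1}(2p-1)$. It is worth noting that your unitary is the very same one: expanding gives $v=qp+(1-q)(1-p)=(p+q-1)(2p-1)$, and since $(2p-1)^2=1$ and $(p+q-1)^2=1-(p-q)^2$, conjugating the square root by the symmetry $2p-1$ shows $v\,|v|^{-1}=(p+q-1)\,|p+q-1|^{-1}(2p-1)$. So you have supplied precisely the computation the paper outsources, and your functional-calculus extraction of the constant $\sqrt 2$ via $u+u^*=2|v|$ is exactly the mechanism underlying the cited references.
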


\begin{remark}
    A conclusion with a less transparent estimate than $\sqrt{2}\norm{p-q}$ appears in almost every C*-algebra textbook, for example \cite[Proposition II.3.3.4(ii)]{Black:Operator}. The above version follows from the discussion before Theorem V.1.41 in \cite{Tak:TheoryI}; for further details see a MathOverflow answer \cite{mathoverflow:close-projns-are-unitarily-equiv}. In fact, one can use $u=(p+q-1)|p+q-1|^{-1}(2p-1)$.
\end{remark}

The next lemma follows from straightforward calculations, so we omit its proof.

\begin{lemma}\label{L.sumoforthogonals}
Let $m\in\NN$, $\varepsilon>0$, and let $(p_i)_{i=1}^m$ and $(q_i)_{i=1}^m$ be sequences of mutually orthogonal projections in $\mathcal B(H)$. For each $i\leq m$, suppose   $v_i$ is a partial isometry with domain $p_i[H]$ and codomain $q_i[H]$ such that $\norm{v_i\xi-\xi}<\varepsilon\norm{\xi}$ for every $\xi\in p_i[H]$. Then for every $\xi\in (\sum_{i=1}^m  p_i)[H]$ we have that $\norm{\sum_{i=1}^m v_i\xi-\xi}<\varepsilon\norm{\xi}$.
\end{lemma}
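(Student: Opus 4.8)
The plan is to reduce the statement to a single application of the Pythagorean theorem after decomposing the input vector along the orthogonal family $(p_i)$. Write $p=\sum_{i=1}^m p_i$, fix a nonzero $\xi\in p[H]$, and set $\xi_i=p_i\xi$, so that $\xi=\sum_{i=1}^m\xi_i$ with the $\xi_i$ mutually orthogonal and $\norm{\xi}^2=\sum_{i=1}^m\norm{\xi_i}^2$. Since each $v_i$ is supported on $p_i[H]$ we have $v_i\xi=v_i\xi_i$, whence $\sum_{i=1}^m v_i\xi=\sum_{i=1}^m v_i\xi_i$ and therefore $\sum_{i=1}^m v_i\xi-\xi=\sum_{i=1}^m(v_i\xi_i-\xi_i)$. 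The hypothesis gives the pointwise bound $\norm{v_i\xi_i-\xi_i}<\varepsilon\norm{\xi_i}$ for every $i$ with $\xi_i\ne 0$, so everything comes down to controlling the norm of a sum of these difference vectors.

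First I would record the two orthogonality facts that are immediate from the standing hypotheses: the $\xi_i\in p_i[H]$ are mutually orthogonal because the $p_i$ are, and the $v_i\xi_i\in q_i[H]$ are mutually orthogonal because the $q_i$ are. The heart of the argument, and the step I expect to be the main obstacle, is to upgrade this to mutual orthogonality of the \emph{differences} $v_i\xi_i-\xi_i$: each such difference lies in $\ran(p_i)+\ran(q_i)$, so it suffices that these subspaces be pairwise orthogonal as $i$ varies. Once this is secured, Pythagoras gives $\norm{\sum_{i=1}^m(v_i\xi_i-\xi_i)}^2=\sum_{i=1}^m\norm{v_i\xi_i-\xi_i}^2<\sum_{i=1}^m\varepsilon^2\norm{\xi_i}^2=\varepsilon^2\norm{\xi}^2$, the strict inequality surviving the summation because $\xi\ne 0$ forces $\xi_i\ne 0$ for at least one $i$; taking square roots finishes the proof.

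The only genuine point to verify is thus the mutual orthogonality of the spaces $\ran(p_i)+\ran(q_i)$, i.e. that $p_i\perp p_j$, $q_i\perp q_j$ and $p_i\perp q_j$ for all $i\ne j$. The first two relations are exactly the hypotheses; the third is what makes the cross terms $\langle v_i\xi_i-\xi_i,\,v_j\xi_j-\xi_j\rangle$ vanish and is what one must keep track of in context. In the situations where this lemma is used the $p_i$ and $q_i$ attached to distinct indices act on pairwise orthogonal blocks $\ell_2(X_{n})$ of the coarse disjoint union, so this cross-orthogonality holds automatically; under that block structure the calculation above is entirely routine and the constant $\varepsilon$ is preserved. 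If one wanted the statement with no block assumption, this is precisely the place where the estimate would have to be revisited, since the cross terms do not cancel for arbitrary orthogonal families $(p_i)$ and $(q_i)$.
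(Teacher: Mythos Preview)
The paper omits the proof (``straightforward calculations''), so there is nothing to compare against directly; your Pythagorean decomposition is precisely the intended computation, and it is correct once the differences $v_i\xi_i-\xi_i$ are pairwise orthogonal.

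You are right to isolate the needed extra hypothesis $p_iq_j=0$ for $i\ne j$, and right to be uneasy: without it the lemma as written is in fact \emph{false}, not merely harder to prove. In $\mathbb{R}^3$, take $p_i$ to be the projection onto $e_i$ ($i=1,2$), fix a small $t>0$, set $\delta=-te_1-te_2+\sqrt{2t(1-t)}\,e_3$, and define $v_ie_i=e_i+\delta$. One checks that $v_1e_1$ and $v_2e_2$ are orthonormal and $\|v_ie_i-e_i\|=\sqrt{2t}$, yet for $\xi=e_1+e_2$ the left-hand side of the conclusion equals $\|2\delta\|=2\sqrt{2t}$ while the right-hand side is $\sqrt{2}\,\varepsilon$; any $\varepsilon$ with $\sqrt{2t}<\varepsilon<2\sqrt{t}$ then satisfies the hypothesis but violates the conclusion. (With $m$ summands the loss can be as large as a factor $\sqrt{m}$.) So your closing caveat is exactly on point: the statement needs the cross-orthogonality hypothesis you identified.

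One minor correction to your final remark about the application: there the domain projections are the $q_{G(n)}$ coming from the abstract masa $A$ and the codomain projections are the $\chi_{X_n}$. The $q_{G(n)}$ are only norm-close to $\chi_{X_n}$, not subordinate to $\ell_2(X_n)$, so the block orthogonality you invoke does not literally hold in that setting either; showing that $u-1$ is compact therefore needs a little more care than a direct appeal to this lemma as stated.
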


\begin{proof}[Proof of Lemma \ref{lemma:ueqmasas}]
As $\cstql(X)$ contains the compacts, it follows from \cite[Lemma 2.5 and Proposition 2.7]{WhiteWillett2017} that there is a  sequence  $(q_n)_{n\in\mathbb{N}}$ in $A$ of orthogonal projections with rank 1, such that  \[A=W^*(\{q_n\mid n\in\N\}),\] i.e., $A$ is the von Neumann algebra generated by $(q_n)_{n\in\mathbb{N}}$. As $(q_n)_{n\in\mathbb{N}}$ is maximal with this property, we have $\SOTh\sum_{n\in\mathbb{N}}q_n=\Id_{\ell_2(X)}$, and $A$ is a masa (maximal abelian C*-subalgebra) in $\B(\ell_2(X))$.

For each $G\subseteq \N$, let    
\[
q_G=\sum_{n\in G}q_n.
\]

\begin{claim}\label{claim1}
For every $F\subseteq\mathbb N$ we have that $\pi(\chi_{X_F})\in \pi[A]$. Hence for every infinite $F\subseteq\mathbb N$ there is $G\subseteq\mathbb{N}$ such that $\chi_{X_F}-q_G$ is compact.
\end{claim}

\begin{proof}
First, the image of a masa in $\cB(\ell_2(X))$ under $\pi$ is a masa in the Calkin algebra $\cB(\ell_2(X))/\cK(\ell_2(X))$. Indeed,  this is a consequence of a theorem by Johnson and Parrott (see  \cite[Theorem 2.1]{JohnsonParrott1972JFA}); alternatively, see \cite[Theorem 3.5]{BaudierBragaFarahVignatiWillett2024vNa} for a precise derivation of this using  Johnson and Parrott's theorem. Therefore   $\pi[A]$ is a masa in the Calkin algebra, and thus it is a masa in $\cstql(X)/\mathcal K(\ell_2(X))$. Since $\pi(\chi_{X_F})\in\mathcal Z(\cstql(X)/\mathcal K(\ell_2(X)))$ by \eqref{Eq.HigsonFunctions.Center.Corona}, each masa of $\cstql(X)/\mathcal K(\ell_2(X))$ must contain $\pi(\chi_{X_F})$ for all $F\subseteq \N$. In particular, $\pi[A]$ does.

The second assertion follows from the fact that all projections in $\pi[A]$ have the form $\pi(q_G)$ for some $G\subseteq\mathbb N$. Indeed, as $A=W^*(\{q_n\in n\in\N\})$, $A$ is a von Neumann algebra and, in particular, it has real rank zero. This implies that projections in $\pi[A]$ must be the image of projections in $A$ (see, for instance,  \cite[Lemma 3.1.13]{FarahBook2019} for a proof).
\end{proof}

\begin{claim}\label{claim2}
For every $\varepsilon>0$ there is $n\in\N$ such that for every finite $F\subseteq\mathbb N$ with $\min F>n$ there exists a (necessarily finite) $G\subseteq\mathbb N$ such that $\norm{\chi_{X_F}-q_G}<\varepsilon$.
\end{claim}

\begin{proof}
Suppose this is not the case for a given $\eps>0$. Then there is a disjoint sequence $(F_k)_{k\in\mathbb{N}}$ of  finite subsets of $\N$  such that  $\|\chi_{X_{F_k}}-q_G\|\geq \varepsilon$ for all $k\in\N$ and all $G\subseteq \N$. Applying \cite[Lemma 4.3]{BaudierBragaFarahVignatiWillett2023BijExp} to the sequences $(p_k=\chi_{F_k})_{k\in\mathbb{N}}$ and $(q_k)_{k\in\mathbb{N}}$, we get that there is a (necessarily infinite) $F\subseteq\mathbb N$ such that $\chi_{X_F}-q_G$ is not compact for any $G\subseteq\mathbb N$. This contradicts Claim~\ref{claim1}.
\end{proof}

\begin{claim}\label{claim3}
There is an increasing sequence $(n_k)_{k\in\mathbb{N}}$ of naturals   such that for every $k\in\N$ and every    $F\subseteq\mathbb N$ with $\min F>n_k$ there is   $G=G(F)\subseteq \N$ such that
\[
\norm{\chi_{X_F}-q_G}\leq 2^{-k+1}.
\]
If $k>2$, $G$ is unique. \end{claim}

\begin{proof}
We let $n_{0}=0$ and construct $(n_k)_{k\in\mathbb{N}}$ by induction. If $n_k$ has been defined, let $n_{k+1}>n_k$ be given by Claim~\ref{claim2} for $\varepsilon=2^{-k}$. Suppose now that $F$ is such that $\min F>n_k$. If $F$ is finite, the existence of $G$ is guaranteed by Claim~\ref{claim2} directly. If $F$ is infinite, let 
\[
G=\bigcup_{F'\subseteq F,\ |F'|<\infty}G(F').
\]
So, \[q_G=\SOTh \lim_{F'\subseteq F,\ |F'|<\infty} q_{G(F')}. \]
Since 
\[
\chi_{X_F}=\SOTh\lim_{F'\subseteq F,\ |F'|<\infty} \chi_{X_{F'}}  ,
\]
we have that 
\[
\norm{\chi_{X_F}-q_G}\leq\limsup_{F'\subseteq F, F' \text{ finite}}\norm{\chi_{X_{F'}}-q_{G(F')}}\leq 2^{-k+1}.
\]
The uniqueness of $G(F)$ for $\min F>n_3$ comes from the fact that   $\norm{q_G-q_{G'}}=1$ for any distinct $G$ and $G'$. 
\end{proof}
We now continue with the proof of Lemma \ref{lemma:ueqmasas}. Fix $\tilde F=\mathbb N\setminus \{0,\ldots,n_3\}$, and let $H=G(\tilde F)$. Since 
\[
\norm{(1-\chi_{X_{\tilde F}}) - (1-q_H)}= \norm{\chi_{X_{\tilde F}}-q_H}\leq \frac{1}{2},
\]
the projections $1-\chi_{X_{\tilde F}}$ and $1-q_H=q_{\mathbb N\setminus H}$ have the same finite rank.
Let $v_0$ be a partial isometry such that $v_0v_0^*=\chi_{X_{\{0,\ldots,n_3\}}}$ and $v_0^*v_0=q_{\mathbb N\setminus H}$ with the property that for every $n\leq n_3$ we have that $v_0^*\chi_{X_n}v_0$ is of the form $q_F$ for some $F\subseteq \mathbb N\setminus H$. This implies that $v_0q_{\mathbb N\setminus H}v_0^*=\chi_{X_{\{0,\ldots,n_3\}}}$.

Suppose $i\geq 3$ and let $n\in (n_i,n_{i+1}]$. Since $\|\chi_{X_n}-q_{G(n)}\|<2^{-i+1}$ we can find a unitary $u_n\in\mathcal B(\ell_2(X))$ such that $\|u_n-1\|<\sqrt{2}2^{-i+1}$ such that $u_nq_{G(n)}u_n^*=\chi_{X_n}$ by Lemma \ref{LemmaBlack:Operator}. We let $v_n=\chi_{X_n}u_nq_{G(n)}$ so that $v_n$ is a partial isometry, $v_nv_n^*=\chi_{X_n}$ and $v_n^*v_n=q_{G(n)}$ for all $n$. 

By considering (for example) ranks of the projections in the orthogonal family $(\chi_{X_n})_{n>n_3}$, we observe that the projections $(q_{G(n)})_{n>n_{3}}$ are also pairwise orthogonal. Furthermore,  since $\SOTh\sum_{n>n_3}\chi_{X_n} = \chi_{\tilde F}$, we have 
 \[\SOTh\sum_{n>n_3}q_{G(n)} = q_H.\]

Consequently, the formula \[u=v_0+\sum_{n>n_3} v_n\] defines a unitary.
Furthermore, by construction, $\chi_{X_F}=uq_{G(F)}u^*\in uAu^*$ for all $F\subseteq \mathbb{N}$.

We are left to show that $u-1$ is compact. Pick $\varepsilon>0$, and let $k$ be such that $2^{-k+1}<\varepsilon$. Let $G=\bigcup_{n\leq n_k}G(n)$. Let $\xi$ be a unit vector in the orthogonal of $q_{G}$. We can assume that $\xi$ has finite support, so that there is a finite $k'$ such that $\xi\in q_S[H]$ where $S=\bigcup_{n\in (n_k,n_{k'}]}G(n)$. In particular, $u\xi=\sum_{n\in (n_k,n_{k'}]}v_n\xi$. Since for all such $n$ we have that $\norm{v_n\xi-\xi}<\sqrt{2}\varepsilon$ and $\xi$ is a unit vector, Lemma~\ref{L.sumoforthogonals} gives that $\norm{u\xi-\xi}<\sqrt{2}\varepsilon$. As $\varepsilon$ is arbitrary, this shows that $u-1$ is compact.\end{proof}

\begin{proof}[Proof of Theorem \ref{ThmExpNoCartanlInfty}] For a contradiction, suppose that there is a Cartan subalgebra $A\subseteq \cstql(X)$ isomorphic to $\ell_\infty$. Applying Lemma~\ref{lemma:ueqmasas}, we can assume that each $\chi_{X_F}$ belongs to $A$ for all $F\subseteq\mathbb{N}$. By \cite[Proposition 4.15]{WhiteWillett2017}, there is a u.l.f.\ coarse space\footnote{$Y$ is not necessarily a metric space, only a coarse space (see e.g.~\cite[Section 4]{WhiteWillett2017} or \cite{RoeBook} for details), as we do not assume co-separability of $A$. See also Remark \ref{Remark.metrizability} below.} $(Y,\cE)$ and an isomorphism $\Phi\colon \cstql(X)\to \cstu(Y)$ such that $\Phi[A]=\ell_\infty(Y)$. By \cite[Corollary 3.3]{BaudierBragaFarahKhukhroVignatiWillett2021uRaRig} applied to the collection $\left(\Phi^{-1}(\chi_y)\right)_{y\in Y}$, there is $\theta>0$ and $f\colon X\to Y$ such that 
\[\|\Phi(\chi_x)\delta_{f(x)}\|\geq \theta\]
for all $x\in X$.\footnote{Note that while \cite[Corollary 3.3]{BaudierBragaFarahKhukhroVignatiWillett2021uRaRig} is stated for the uniform Roe algebra, it also holds for the quasi-local algebra. This is explicitly noted in \cite[Theorem 3.5]{BaudierBragaFarahKhukhroVignatiWillett2021uRaRig}. } Moreover, the map $f$ is a coarse embedding (see \cite[Theorem 1.12]{BaudierBragaFarahKhukhroVignatiWillett2021uRaRig}). In particular, $Z=f[X]$ is metrizable.

For each $n\in\N$, let $Z_n=f[X_n]$ and  $p_n=\Phi^{-1}(\chi_{Z_n})$.    Notice that, since $f$ is uniformly finite-to-one, there is $\gamma>0$ such that 
\begin{equation}\label{EqZnproportionalXn}
|Z_n|\geq |X_n|\gamma \ \text{  for all }\ n\in\N.
\end{equation} 
If $z\in Z_n$, let $x$ be such that $z=f(x)$. We thus have that \[\norm{\Phi^{-1}(\chi_z)\chi_{X_n}}\geq \norm{\Phi^{-1}(\chi_z)\chi_x}\geq\theta>0.\] Since $\Phi^{-1}(\chi_z)$ and $\chi_{X_n}$ commute (as they both belong to $A$) and $\Phi^{-1}(\chi_z)$ has rank one, then $\Phi^{-1}(\chi_z)\leq\chi_{X_n}$. As $z$ is arbitrary, then $p_n\leq\chi_{X_n}$.

 Letting $V_n=\mathrm{range}(p_n)$ for all $n\in\N$, we have that each $V_n$ is a subspace of $\ell_2(X_n)$ with dimension at least $|X_n|\gamma$. By Theorem \ref{ThmExpContainsPropducMatrixPropDim}, there  are an increasing sequence $(n_k)_{k\in\mathbb{N}}$ of naturals and a sequence $(W_k)_{k\in\mathbb{N}}$ such that
    \begin{enumerate}
        \item each $W_k$ is a subspace of $V_{n_k}$,
        \item $\lim_{k\to\infty}\dim(W_k)=\infty$, and 
        \item $\prod_{n\in\mathbb{N}}\cB(\ell_2(W_k))\subseteq \cstql(X)$.
    \end{enumerate}
Since $W_k$ is a subspace of $V_{n_k}$, 
\[
\Phi\restriction \mathcal B(\ell_2(W_k))\subseteq\chi_{Z_{n_k}}\cstu(Y)\chi_{Z_{n_k}}\subseteq\chi_Z\cstu(Y)\chi_Z=\cstu(Z).
\] Since $\dim W_k\to\infty$, this provides a copy of $\prod_{k\in\mathbb{N}}\mathrm M_{m_k}(\mathbb C)$, for some increasing sequence $m_k$, inside $\cstu(Z)$, the uniform Roe algebra of the metrizable space $Z$. This contradicts \cite[Theorem A]{Ozawa2023uRaSmallerQL} and finishes the proof.
\end{proof}

\begin{remark}\label{Remark.Asymp.Exp}
We point out that the only property of expander graphs used in the proof of Theorem \ref{ThmExpContainsPropducMatrixPropDim} above is \eqref{EqExpGapMed12dec23}. This is in fact a weaker condition than being an expander and it is referred to in the literature as an \emph{asymptotic expander}, see \cite{KhukhroLiVigoloZhang2021AdvMath}. Therefore, Theorem \ref{ThmExpNoCartanlInfty} and, in particular, Theorem \ref{ThmExpNoCartanlInfty} remain valid for asymptotic expanders as well. This is also the case in \cite{Ozawa2023uRaSmallerQL}.
\end{remark}

\begin{remark}
    \label{Remark.metrizability}
It remains open whether metrizability of coarse spaces is a property stable under isomorphisms. More precisely, suppose that $X$ is a u.l.f.\ metric space and $Y$ is a u.l.f.\ coarse space (see \cite{RoeBook} for more details about coarse spaces). If either $\cstu(X)\cong \cstu(Y)$ or $\cstql(X)\cong \cstql(Y)$, does it follow that $Y$ is metrizable? If this question had a positive answer, then Theorem \ref{ThmExpNoCartanlInfty} would become much more straightforward. 
\end{remark}

\section{Extension of isomorphisms between uniform Roe algebras}\label{SectionExtIso}

As mentioned in the introduction, the question whether $\cstu(X)\cong \cstu(Y)$ if and only if $\cstql(X)\cong \cstql(Y)$ remains open. In this subsection, we prove that the forward implication is always valid.

\begin{proof}[Proof of Theorem \ref{ThmIsoOfURAExtendsToQL}]
    Let $\Phi\colon \cstu(X)\to \cstu(Y)$ be an isomorphism.   By \cite[Lemma 3.1]{SpakulaWillett2013AdvMath} $\Phi$ is spatially implemented, meaning there is a unitary $u\colon \ell_2(X)\to \ell_2(Y)$ such that $\Phi=\mathrm{Ad}(u)$. So, $\Phi$ extends to an isomorphism between $\cB(\ell_2(X))$ and $\cB(\ell_2(Y))$, which we still denote by $\Phi$. We show that it restricts to an isomorphism from $\cstql(X)$ to $\cstql(Y)$.

    Since $\Phi$ restricts to an isomorphism between $\cK(\ell_2(X))$ and $\cK(\ell_2(Y))$, it induces an isomorphism 
    $\Psi$ between the Calkin algebras of $\ell_2(X)$ and $\ell_2(Y)$ making the following diagram commute.
\[
\xymatrix{ \cB(\ell_2(X))\ar[r]^\Phi\ar[d]_{\pi_X}& \cB(\ell_2(X))\ar[d]^{\pi_Y}\\
\cB(\ell_2(X))/\cK(\ell_2(X))\ar[r]_\Psi& \cB(\ell_2(X))/\cK(\ell_2(Y)).}
\] 
The isomorphism $\Psi$ restricts to an isomorphism between $\cstu(X)/\cK(\ell_2(X))$  and  $\cstu(Y)/\cK(\ell_2(Y))$, which then restricts to an isomorphism between their centers. As recalled above in \eqref{Eq.HigsonFunctions.Center.Corona}, the centers of these algebras are equal to the corresponding Higson coronas. Hence, modulo compact operators, isomorphisms between uniform Roe algebras send Higson functions to Higson functions.
    
    Therefore, the fact that $\Phi$ restricts to an isomorphism between $\cstql(X)$ and $\cstql(Y)$ follows immediately from \cite[Theorem 3.3]{SpakulaZhang2020JFA}: given a u.l.f.\ metric space $Z$, an operator $a\in \cB(\ell_2(Z))$ is in $\cstql(Z)$ if and only if $[a,h]$ is compact for all Higson functions $h\colon Z\to \C$.
\end{proof}

\begin{remark}\label{remark1}
    It remains open whether an isomorphism between $\cstql(X)$ and $\cstql(Y)$ implies that $\cstu(X)$ and $\cstu(Y)$ are isomorphic. On the other hand, this is known in case of coarse disjoint unions of expander graphs. In fact, in this case, isomorphism of quasi-local algebras implies bijective coarse equivalence of the underlying spaces (this follows from the main result of \cite{BaudierBragaFarahVignatiWillett2023BijExp} and considerations made in \cite{martnez2024rigidity}), and thus isomorphism of their uniform Roe algebras. We mention that it is known that an isomorphism  $\cstql(X)\to \cstql(Y)$ does not need to restrict to an isomorphism  $\cstu(X)\to \cstu(Y)$. Indeed, this is done in \cite[Remark 1.3.2.(iv)]{martnez2024rigidity} for the non-uniform versions of the Roe algebra and the quasi-local algebra, and it is straightforward to adapt the argument for the uniform versions.
\end{remark}

\begin{acknowledgments}
    This paper was written under the auspices of the American Institute of Mathematics (AIM) SQuaREs program and as part of the \emph{Expanders, ghosts, and Roe algebras} SQuaRE project. B. M. Braga  was partially supported by FAPERJ, grant E-26/200.167/2023,  by CNPq, grant 303571/2022-5, and by Serrapilheira, grant R-2501-51476. J.~\v{S}pakula was partially supported by UK's Engineering and Physical Sciences Research Council (EPSRC) Standard Grant EP/V002899/1. A. Vignati was partially supported by the Institut Universitaire de France. The authors would like to thank Ilijas Farah and Rufus Willett for conversations and support in the making of this paper, and Kostyantyn Krutoy for useful comments on Remark \ref{remark1}.
\end{acknowledgments}
 
\bibliographystyle{amsalpha}
 \bibliography{bibliography}

\end{document}